\newtheorem{thm}{Theorem}
\newtheorem{prop}[thm]{Proposition}
\newtheorem{lem}[thm]{Lemma}
\newtheorem{cor}[thm]{Corollary}
\newtheorem{clm}[thm]{Claim}
\theoremstyle{definition}
\newtheorem{dfn}[thm]{Definition}
\newtheorem{rem}[thm]{Remark}
\newtheorem{ex}[thm]{Example}
\theoremstyle{remark}
\newtheorem*{org}{Organization}
\newtheorem*{ack}{Acknowledgment}
\numberwithin{thm}{section}
\numberwithin{equation}{section}
\DeclareMathOperator{\dist}{dist}
\DeclareMathOperator{\diam}{diam}
\DeclareMathOperator{\vol}{vol}
\DeclareMathOperator{\str}{str}
\begin{document}

\title[Regular points of extremal subsets]{Regular points of extremal subsets in Alexandrov spaces}
\author[T. Fujioka]{Tadashi Fujioka}
\address{Department of Mathematics, Kyoto University, Kitashirakawa, Kyoto 606-8502, Japan}
\email{\href{mailto:tfujioka@math.kyoto-u.ac.jp}{tfujioka@math.kyoto-u.ac.jp}}
\date{\today}
\subjclass[2010]{53C20, 53C23}
\keywords{Alexandrov spaces, extremal subsets, regular points, strainers}
\thanks{Supported in part by JSPS KAKENHI Grant Number 15H05739}

\begin{abstract}
We define regular points of an extremal subset in an Alexandrov space and study their basic properties.
We show that a neighborhood of a regular point in an extremal subset is almost isometric to an open subset in Euclidean space and that the set of regular points in an extremal subset has full measure and is dense in it.
These results actually hold for strained points in an extremal subset.
Applications include the volume convergence of extremal subsets under a noncollapsing convergence of Alexandrov spaces, and the existence of a cone fibration structure of a metric neighborhood of the regular part of an extremal subset.
In an appendix, a deformation retraction of a metric neighborhood of a general extremal subset is constructed.
\end{abstract}

\maketitle

\section{Introduction}\label{sec:intro}

Alexandrov spaces are a generalization of Riemannian manifolds having a lower sectional curvature bound.
They naturally arise as Gromov-Hausdorff limits of Riemannian manifolds with sectional curvatures uniformly bounded below.
The fundamental theory of Alexandrov spaces was developed by Burago, Gromov and Perelman \cite{BGP}.
In general, Alexandrov spaces contain singular points.
Extremal subsets are a kind of singular point sets in Alexandrov spaces introduced by Perelman and Petrunin \cite{PP1}, which are closely related to the stratification of Alexandrov spaces.
The boundary of an Alexandrov space is a typical example of an extremal subset.
It is known that any Alexandrov space admits a stratification into topological manifolds such that the closures of its strata are all possible primitive extremal subsets of that space.
The aim of this paper is to define and study regular points of such strata.

Let us first recall the basic results on regular points of an Alexandrov space.
Let $M$ be an $n$-dimensional Alexandrov space (note that the Hausdorff dimension coincides with the topological dimension).
A regular point of $M$ is a point whose tangent cone is isometric to $\mathbb R^n$.
Then, a neighborhood of a regular point is almost isometric to an open subset of $\mathbb R^n$ (\cite[9.4]{BGP}).
Furthermore, the Hausdorff dimension of the set of non-regular points is no more than $n-1$ (\cite[10.6]{BGP}, \cite[Theorem A]{OS}), and in particular, the set of regular points is dense in $M$ (\cite[6.7]{BGP}).

We prove similar statements for each extremal subset in $M$.
For the sake of dimensional homogeneity, we only consider primitive extremal subsets (indeed, every extremal subset is uniquely represented as a union of primitive extremal subsets with nonempty relative interior).
Let $E$ be a primitive extremal subset of $M$.
The dimension of $E$, denoted by $m$, is naturally defined by the Hausdorff dimension (see Theorem \ref{thm:main}(1) below).
As in the case of $M$, a \textit{regular point} of $E$ is defined as a point whose tangent cone of $E$ is isometric to $\mathbb R^m$.
Here, the tangent cone of $E$ is equipped with the \textit{extrinsic metric}, that is, the restriction of the metric of the ambient space (on the other hand, the \textit{intrinsic metric} means the one induced by the length structure).
The following is our main result:

\begin{thm}\label{thm:main}
Let $E$ be a primitive extremal subset of an Alexandrov space $M$.
\begin{enumerate}
\item The Hausdorff dimension of $E$ coincides with the topological dimension.
Moreover, it coincides with the maximal dimension of open subsets of $E$ homeomorphic to Euclidean balls.
Let us denote it by $m$.
\item Let $p$ be a regular point of $E$.
Then, for any $\varepsilon>0$, there exists a neighborhood of $p$ in $E$ that is $\varepsilon$-almost isometric to an open subset in $\mathbb R^m$ with respect to both intrinsic and extrinsic metrics of $E$.
\item Let $E_0$ be the set of regular points in $E$.
Then, the Hausdorff dimension of the complement $E\setminus E_0$ is no more than $m-1$.
In particular, $E_0$ is dense in $E$.
\end{enumerate}
\end{thm}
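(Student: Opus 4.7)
The plan is to reduce everything to the notion of an $(m,\delta)$-strained point of $E$: a point $p\in E$ admitting $m$ pairs of points $(a_i,b_i)$ in $M$ such that the initial directions of shortest paths from $p$ to the $a_i$ and $b_i$ lie in the tangent cone of $E$ at $p$ and form an $(m,\delta)$-strainer in the usual sense of \cite{BGP}. A key observation is that, by the defining property of extremal subsets, the gradient flow of every distance function $d_q$ with $q\in M$ preserves $E$; in particular, the initial direction of every shortest path from $p\in E$ to any $q\in M$ belongs to the tangent cone of $E$ at $p$. Consequently, a regular point of $E$, where this tangent cone is isometric to $\mathbb R^m$, is $(m,\delta)$-strained for every $\delta>0$.

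For assertion (1), I would establish the stronger statement that any $(m,\delta)$-strained point $p\in E$ has a neighborhood in $E$ which is $\varepsilon(\delta)$-almost isometric to an open subset of $\mathbb R^m$ for both the intrinsic and the extrinsic metric, where $\varepsilon(\delta)\to 0$ as $\delta\to 0$. The extrinsic part uses the distance-function chart $f=(d_{a_1},\dots,d_{a_m})$ restricted to $E$; that $f|_E$ is $\varepsilon$-almost isometric onto its image follows directly from the \cite{BGP} estimates, which depend only on triangle comparison in the ambient space $M$. The crucial new input is openness of $f(E\cap U)$ in $\mathbb R^m$: to produce a preimage of a prescribed target near $f(p)$, one iterates short gradient flows of $\pm d_{a_i}$ starting at $p$, and extremality of $E$ guarantees that each trajectory stays inside $E$, so the limiting point lies in $E$ and maps close to the target.

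For the intrinsic metric, the iterated gradient flows used in the openness argument themselves furnish curves inside $E$ joining two nearby strained points whose ambient length is close to the Euclidean distance between their $f$-images. Since the intrinsic distance in $E$ is bounded below by the extrinsic distance and bounded above by such path lengths, the two metrics on $E$ agree near $p$ up to a factor $1+\varepsilon$, completing part (1) after letting $\delta\to 0$.

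Part (2) follows by a Hausdorff dimension estimate in the spirit of \cite{BGP} and \cite{OS} adapted to $E$. The almost-isometric charts supplied by (the strengthened form of) part (1) show that the $(m,\delta)$-strained points form an open subset of $E$ of full $m$-dimensional Hausdorff measure, while the complement can be covered by sets on which one fewer independent strainer pair exists; a standard covering argument then forces the complement to have Hausdorff dimension at most $m-1$. Intersecting over a countable sequence $\delta_k\to 0$ produces the regular points, whose complement still has dimension at most $m-1$ and which are therefore dense in $E$. The step I expect to be most delicate is the iterated-gradient-flow construction underlying openness in (1): one must balance the errors in successive flows so that the cumulative deviation from the target in $\mathbb R^m$ remains of order $\varepsilon$ while the trajectory stays inside $E$, and the primitivity assumption on $E$, which guarantees a well-defined local dimension $m$, is what makes the counting of strainer directions meaningful here.
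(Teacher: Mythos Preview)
Your ``key observation'' is false: the gradient flow of $\dist_q$ preserves $E$, but the gradient of $\dist_q$ at $p$ points \emph{away} from $q$ (it maximizes $-\cos|Q'_p\cdot|$), not toward it, so nothing forces the shortest-path direction $q'_p$ to lie in $\Sigma_pE$. The obvious counterexample is $E=\partial M$ with $q$ an interior point. Consequently your nonstandard definition of ``$(m,\delta)$-strained in $E$'' is not obtained for free, and more importantly the claim that the extrinsic bi-Lipschitz estimate ``follows directly from the \cite{BGP} estimates'' is wrong: for an $(m,\delta)$-strainer with $m<n=\dim M$, the map $f=(|a_1\cdot|,\dots,|a_m\cdot|)$ is only $(1+\varepsilon)$-Lipschitz and $(1-\varepsilon)$-open on $M$, not bi-Lipschitz. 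To get $|f(x)f(y)|\ge(1-\varepsilon)|xy|$ for $x,y\in E$ you must show that the direction from $x$ to $y$ is close to the span of the strainer directions, and there is no reason the \emph{ambient} shortest-path direction $y'_x$ should have this property.

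The paper closes this gap with a different idea you do not mention: the generalized Lieberman theorem, which says that an \emph{intrinsic} shortest path in $E$ is a quasigeodesic in $M$. Its initial direction $y^\circ_x$ does lie in $\Sigma_xE$, and a limiting argument (Lemma~\ref{lem:susp}, Lemma~\ref{lem:suspm}) shows $\Sigma_xE$ is $\varkappa(\delta)$-close to the copy of $\mathbb S^{m-1}$ determined by the strainer; the monotonicity of comparison angles for quasigeodesics then yields $|f(x)f(y)|\ge(1-\varkappa(\delta))|xy|_E$, from which the extrinsic estimate follows since $|xy|\le|xy|_E$. Your proposed alternative---building short paths in $E$ by concatenating gradient segments---could in principle replace the Lieberman step, but you would still need the structural fact that $\Sigma_xE$ is close to $\mathbb S^{m-1}$ to make the iteration converge, and this is exactly what your incorrect ``key observation'' was meant to supply. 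Finally, for denseness in part~(2), note that $\dim_{\mathrm H}(E\setminus E_0)\le m-1$ alone does not give denseness of $E_0$ without knowing that every open subset of $E$ has dimension $m$; the paper obtains this from the topological manifold structure of the main part $\mathring E$, which is where primitivity is actually used.
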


Note that the intrinsic and extrinsic metrics of $E$ are known to be locally bi-Lipschitz equivalent.
The above theorem implies that both metrics are almost isometric in a neighborhood of almost every point in $E$.
Recently, Li and Naber \cite{LiN} obtained a much better control on singular sets of Alexandrov spaces than (3).
In particular, $E\setminus E_0$ is $(m-1)$-rectifiable (\cite[1.6]{LiN}).
In the case of a boundary, (2) was known in \cite[12.9.1]{BGP} in a stronger form, and the denseness of regular points was shown in \cite[6.6]{HS}.

Regular points are closely related to strained points.
Recall that a point $p\in M$ is called a $(k,\delta)$-strained point if there exists a collection $\{(a_i,b_i)\}_{i=1}^k$ of pairs of points in $M$ such that
\begin{gather*}
\tilde\angle a_ipb_i>\pi-\delta,\\
\tilde\angle a_ipa_j,\tilde\angle a_ipb_j,\tilde\angle b_ipb_j>\frac\pi2-\delta
\end{gather*}
for all $1\le i\neq j\le k$.
Then, the splitting theorem implies that a point in $M$ is regular if and only if it is $(n,\delta)$-strained for any $\delta>0$.
In particular, the properties of regular points of $M$ mentioned above also hold for $(n,\delta)$-strained points.
Similarly, the following holds:

\begin{thm}\label{thm:main'}
Let $E$ be an $m$-dimensional primitive extremal subset of an Alexandrov space $M$.
Then, a point in $E$ is regular if and only if it is $(m,\delta)$-strained for any $\delta>0$.
In particular, (2) and (3) of Theorem \ref{thm:main} hold for $(m,\delta)$-strained points instead of regular points (provided $\delta\ll\varepsilon$ in (2)).
\end{thm}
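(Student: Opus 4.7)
The plan is to first establish the characterization ``regular $\Leftrightarrow$ $(m,\delta)$-strained for every $\delta>0$'' and then re-run the proof of Theorem~\ref{thm:main} from the strainer hypothesis, keeping track of the condition $\delta\ll\varepsilon$. For the forward direction, assume $p\in E$ is regular, so $T_pE=\mathbb R^m$ with its extrinsic metric. Pick unit vectors $\pm e_1,\ldots,\pm e_m\in T_pE$ forming a standard orthonormal frame, which is an $(m,0)$-strainer in $T_pE$. Since $T_pE$ is the blow-up of $E$ at $p$, for any $\delta>0$ and any sufficiently small scale $r>0$ we can find points $a_i,b_i\in E$ at distance of order $r$ from $p$ whose directions from $p$ are $(\delta/2)$-close to $\pm e_i$. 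Their comparison angles converge to Euclidean angles as $r\to 0$, producing the desired $(m,\delta)$-strainer at $p$.

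For the converse, suppose $p$ is $(m,\delta_k)$-strained for a sequence $\delta_k\to 0$, with strainer points in $E$ (which may be arranged by approximating any $M$-valued strainer by an $E$-valued one, using the density of $E$ near $p$ and its extremality). After rescaling by the strainer scale, a Gromov--Hausdorff subsequential limit yields an $(m,0)$-strainer in $T_pM$ with endpoints in the extremal subcone $T_pE$. By the splitting theorem these endpoints determine an isometrically embedded $S^{m-1}\hookrightarrow\Sigma_pM$, and since the $2m$ limiting strainer directions lie in the extremal subset $\Sigma_pE\subset\Sigma_pM$, iterated use of the extremality property (short minimal geodesics between points of an extremal subset stay inside it) fills in the entire $S^{m-1}$ inside $\Sigma_pE$. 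As $\Sigma_pE$ is a compact Alexandrov space of dimension $m-1$, the inclusion $S^{m-1}\subseteq\Sigma_pE$ forces equality, so $T_pE=\mathbb R^m$ and $p$ is regular. This second implication---controlling the strainer directions to generate the full space of directions of $E$ at $p$---is the main technical obstacle, and it is precisely where extremality and the generalized Lieberman theorem enter essentially.

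To upgrade Theorem~\ref{thm:main} to $(m,\delta)$-strained points with $\delta\ll\varepsilon$, we redo its proof with a strainer in place of regularity. For part~(1), the distance coordinate map $f=(d(\cdot,a_1),\ldots,d(\cdot,a_m))$ associated to the $(m,\delta)$-strainer is an $\varepsilon$-almost isometric immersion on a small neighborhood $U$ of $p$ in $M$ (by the standard strainer theory of \cite{BGP}); its restriction to $E\cap U$ remains $\varepsilon$-almost isometric in the extrinsic metric, and the generalized Lieberman theorem identifies the intrinsic and extrinsic metrics of $E$ near $p$ up to a higher-order error, so $f|_{E\cap U}$ is an $\varepsilon$-almost isometry onto an open subset of $\mathbb R^m$ with respect to both metrics. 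For part~(2), being $(m,\delta)$-strained is an open condition on $E$, and a standard induction on the dimension of primitive extremal subsets covers the non-strained locus by countably many extremal subsets of dimension at most $m-1$, which yields the Hausdorff dimension bound and the density.
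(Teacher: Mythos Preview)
Your proposal has genuine gaps in both the characterization and in part~(1).

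For the converse direction of ``regular $\Leftrightarrow$ $(m,\delta)$-strained for all $\delta$'', several steps fail. Ambient shortest paths between points of an extremal subset need \emph{not} stay in the subset (think of the boundary of a convex body), so you cannot ``fill in $S^{m-1}$'' that way. Also, $\Sigma_pE$ is an extremal subset of $\Sigma_p$, not an Alexandrov space in its own right, so the phrase ``compact Alexandrov space of dimension $m-1$'' is unjustified, and arranging strainer points to lie in $E$ is both unnecessary and unclear. The paper's argument (Proposition~\ref{prop:reg}) avoids all of this: an orthogonal $m$-frame in $\Sigma_p$ gives a splitting $T_p\cong\mathbb R^m\times K(\hat\Sigma)$; since $T_pE$ is extremal in $T_p$ and gradient curves of distance functions stay in extremal subsets (Theorem~\ref{thm:grad}), one gets $\mathbb R^m\times\{o\}\subset T_pE$; and if $T_pE$ contained any point off this factor the same gradient-curve argument would force $\dim T_pE>m$.

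For part~(1), the sentence ``its restriction to $E\cap U$ remains $\varepsilon$-almost isometric in the extrinsic metric'' is exactly what must be proved and does not follow from the ambient statement: when $m<n$ the strainer map $f$ is not an almost isometry on $M$ (it is merely Lipschitz and open), so restricting gives only the upper Lipschitz bound. The lower bound $|f(x)-f(y)|\ge(1-\varkappa(\delta))|xy|_E$ is the heart of Theorem~\ref{thm:reg}. Its proof requires (i) the generalized Lieberman theorem to treat an intrinsic shortest path as a quasigeodesic, (ii) a quasigeodesic version of the strainer estimate $|\tilde\angle a_ix\!\smile\!y-\angle((a_i)'_x,y^\circ_x)|<\varkappa(\delta)$, and crucially (iii) the fact that $\dim\Sigma_xE=m-1$ (because $m$ is the \emph{maximal} strainer number on $E$ near $p$), which via Lemma~\ref{lem:suspm} forces $\Sigma_xE$ to be $\varkappa(\delta)$-close to the embedded $\mathbb S^{m-1}$ and hence $\sum_i\cos^2\angle((a_i)'_x,y^\circ_x)\approx1$. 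You invoke Lieberman only to compare intrinsic and extrinsic metrics, but that comparison (Corollary~\ref{cor:met}) is a \emph{consequence} of the almost-isometry, not an input to it. For part~(2), the paper does not stratify the non-strained locus by lower-dimensional extremal subsets; it simply observes $E\setminus E(m,\delta)\subset M\setminus M(m,\delta)$ and quotes Theorem~\ref{thm:str}, while density (Corollary~\ref{cor:den}) uses the manifold structure of the main part $\mathring E$.
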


Furthermore, the complement of the set of $(m,\delta)$-strained points in $E$ has locally finite $(m-1)$-dimensional Hausdorff measure, which is a direct consequence of \cite[10.6]{BGP}.
It should be noted that there is a much stronger result in \cite[1.4]{LiN}.
See Theorem \ref{thm:str} and Remark \ref{rem:str}.

In the proof of Theorem \ref{thm:main}, the generalized Lieberman theorem \cite[5.3]{PP1} plays an important role, which is a kind of totally geodesic property of extremal subsets.
The almost isometry in Theorem \ref{thm:main}(2) is given by the distance map $f=(|a_1\cdot|,\dots,|a_m\cdot|)$ from an $(m,\delta)$-strainer $\{(a_i,b_i)\}_{i=1}^m$ at $p$.

As applications, we obtain the following two theorems for extremal subsets.
One is the volume convergence of extremal subsets.
Recall that the volume of Alexandrov spaces is continuous under the Gromov-Hausdorff convergence (\cite[10.8]{BGP}, \cite[3.5]{S}, \cite[0.6]{Y1}) and that the proof is based on the properties of regular (strained) points.
Let $\vol_m$ denote the $m$-dimensional Hausdorff measure.
Note that by Theorem \ref{thm:main}, the Hausdorff measure on an extremal subset is independent of which metric (intrinsic or extrinsic) is considered.

\begin{thm}\label{thm:vol}
Let $M_i\overset{\mathrm{GH}}\longrightarrow M$ be a noncollapsing sequence of Alexandrov spaces and suppose that $m$-dimensional extremal subsets $E_i$ of $M_i$ converge to an $m$-dimensional compact extremal subset $E$ of $M$.
Then, $\lim_{i\to\infty}\vol_mE_i=\vol_mE$.

Moreover, $(E_i,\vol_m)$ converges to $(E,\vol_m)$ in the measured Gromov-Hausdorff sense.
The same is true for the pointed Gromov-Hausdorff convergence if $E$ is noncompact.

\end{thm}

If $M_i$ are $n$-dimensional Alexandrov spaces with curvature $\ge\kappa$, diameter $\le D$ and volume $\ge v>0$, and $E_i$ are $m$-dimensional extremal subsets of $M_i$, then there exists a convergent subsequence as above (in particular, $\dim E=m$; see \cite[9.13]{K} for instance).
If $M_i$ collapses, then the theorem does not hold in general, even if $E_i$ does not collapse.
For example, let $M$ be an Alexandrov space with nonempty boundary and consider $M_\varepsilon:=M\times [0,\varepsilon]$ and $E_\varepsilon:=\partial M_\varepsilon=\partial M\times\{0,\varepsilon\}\cup M\times[0,\varepsilon]$, where $\varepsilon\to 0$.
Then, $\vol E_\varepsilon$ does not converge to $\vol M$, while $E_\varepsilon$ converges to $M$.

The other result is about the relation between an extremal subset and its neighborhood in the ambient Alexandrov space.
It is known that the boundary of an Alexandrov space has a collar neighborhood (\cite[5.10]{Y2}).
The following result is analogous to Yamaguchi's theorem \cite[0.2]{Y1} (see also \cite{F2}) stating that if a sequence of Alexandrov spaces collapses to a space with only weak singularities, then the collapsing spaces admit almost Lipschitz submersions onto the limit space.

\begin{thm}\label{thm:nhd}
Let $E$ be an $m$-dimensional compact primitive extremal subset of an Alexandrov space $M$.
Suppose that every point in $E$ is regular (or more generally, $(m,\delta)$-strained, where $\delta$ depends on $M$).
Then, for sufficiently small $\varepsilon>0$, the $\varepsilon$-neighborhood $U_\varepsilon(E)$ of $E$ is a fiber bundle over $E$ with conical fiber.
\end{thm}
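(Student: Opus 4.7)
The plan is to construct the bundle structure locally from strainer maps on the ambient space $M$ (not just on $E$), then globalize by compactness of $E$. By Theorem~\ref{thm:main'}, if $\delta$ is sufficiently small, every $p\in E$ admits an $(m,\delta)$-strainer $\{(a_i,b_i)\}_{i=1}^m$ in $M$ whose distance map $f_p=(|a_1\cdot|,\dots,|a_m\cdot|):M\to\mathbb R^m$ is a near-isometric homeomorphism from a neighborhood of $p$ in $E$ onto an open set $V_p\subset\mathbb R^m$. The same strainer makes $f_p$ an admissible (regular) map on a neighborhood $W_p\subset M$ of $p$ in the sense of Burago--Gromov--Perelman, so Perelman's fibration theorem gives that $f_p:W_p\to V_p$ is a locally trivial fiber bundle with $(n-m)$-dimensional MCS fibers.

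Since $f_p|_{E\cap W_p}$ is a homeomorphism onto $V_p$, composing with $(f_p|_E)^{-1}$ yields a homeomorphism $W_p\cap U_{\varepsilon}(E)\cong(W_p\cap E)\times F_p$ for $\varepsilon$ sufficiently small, where $F_p$ is a neighborhood of $p$ in $f_p^{-1}(f_p(p))$. To identify $F_p$ with a topological cone, I would apply the stability theorem to the rescalings $(\lambda M,p)$ as $\lambda\to\infty$. Because $p$ is regular in $E$ and $E$ is extremal, the tangent cone splits as $T_pM\cong T_pE\times C(\Sigma^\perp_p)\cong\mathbb R^m\times C(\Sigma^\perp_p)$, where $\Sigma^\perp_p\subset\Sigma_pM$ is the set of directions polar to $\Sigma_pE\cong S^{m-1}$; here extremality (via the generalized Lieberman theorem~\cite[5.3]{PP1}) is used to show that every direction decomposes correctly and that the join/splitting is isometric. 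Stability then gives $F_p\cong C(\Sigma^\perp_p)$ with apex corresponding to $p$.

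To globalize, cover compact $E$ by finitely many such neighborhoods $W_{p_\alpha}$ and take a common $\varepsilon$, $\delta$. On each overlap $W_{p_\alpha}\cap W_{p_\beta}\cap U_\varepsilon(E)$, both trivializations assign to a point $q$ near $E$ the unique nearest footpoint in $E$ (up to an error $o(\varepsilon)$ controlled by the strainer conditions, which can be absorbed by shrinking $\varepsilon$ relative to the Lebesgue number of the cover). Hence the transition maps cover the identity on the base and preserve the cone apex, producing a genuine fiber bundle over $E$ whose fiber is the conical space $C(\Sigma^\perp_p)$. The deformation retraction onto $E$ is then defined fiberwise by contracting each cone to its apex, and the local definitions glue because the bundle is locally trivial with cone fibers.

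The main obstacle will be identifying the fiber as a topological cone rather than just an MCS space, since the plain fibration theorem does not by itself produce cone structure. This reduces to (i) establishing the isometric splitting $T_pM\cong\mathbb R^m\times C(\Sigma^\perp_p)$ at every regular point of $E$, for which extremality and the generalized Lieberman theorem are essential, and (ii) showing that the cone structure is preserved under the gluing---which follows because the transition maps fix the zero section $E$ and are radially controlled by the uniform strainer estimates.
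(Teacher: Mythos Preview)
Your globalization step has a genuine gap. The local projections $\pi_\alpha=(f_{p_\alpha}|_E)^{-1}\circ f_{p_\alpha}$ are \emph{different} maps on overlaps; noting that each $\pi_\alpha(q)$ is close to the nearest point of $E$ (this is essentially Claim~\ref{clm:1} in the paper) does not make them agree, and ``absorbing an $o(\varepsilon)$ error'' does not produce a well-defined bundle projection or a cocycle condition on transition maps. The paper confronts this head-on: it builds a \emph{single} global map $f:U_\rho(E)\to E$ by averaging the local maps $\psi_j^{-1}\circ\varphi_j$ with bump functions $\chi_k$, and then proves two nontrivial estimates (Claims~\ref{clm:1} and~\ref{clm:2}) showing that the averaged map stays $\varkappa(\delta)$-close to each local one both in value and in first-order behavior. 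Only after this does one have a genuine base map over which to fiber.

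Your route to the cone structure also diverges from the paper and is weaker. The paper does not use stability or the exact splitting $T_pM\cong\mathbb R^m\times C(\Sigma_p^\perp)$. Instead it augments $f$ by the distance function $|E\cdot|$: using Lemma~\ref{lem:grad} (the gradient of $\dist_E$ is bounded away from zero near $E$), one shows $(\psi_j\circ f,|E\cdot|)$ is noncritical on $U_\rho(E)\setminus E$ in Perelman's sense, so the fibration theorem yields trivializations over $V\times(0,\rho)$; extending across $|E\cdot|=0$ produces the cone $K_\rho(\Sigma)$ directly, with the radial coordinate coming from $|E\cdot|$. Your stability argument would at best handle the ``regular'' hypothesis (under the mere $(m,\delta)$-strained assumption the tangent cone need not split isometrically), and even then the stability homeomorphism is not a priori compatible with $f_p$, so it does not identify the $f_p$-fiber with the tangent-cone factor without further argument. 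Finally, the generalized Lieberman theorem plays no role in the paper's proof of this theorem; the relevant input from extremality here is Lemma~\ref{lem:grad}, not quasigeodesics.
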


In particular, $E$ is a deformation retract of $U_\varepsilon(E)$.
This is actually the case for any compact extremal subset.
The proof is given in an appendix since it is not related with the regular point theory.

\begin{thm}\label{thm:def}
Let $E$ be a compact extremal subset of an Alexandrov space $M$.
Then, for any sufficiently small $\varepsilon>0$, $U_\varepsilon(E)$ admits a Lipschitz deformation retraction onto $E$.
\end{thm}

\begin{org}
The organization of this paper is as follows:
In \S\ref{sec:pre}, we review the basics of Alexandrov spaces and extremal subsets, including the notions of strainers, gradient curves and quasigeodesics.
In \S\ref{sec:bas}, we first show that the distance map from a strainer is open on an extremal subset and that the Hausdorff dimension of an extremal subset coincides with its strainer number.
We then define regular points of a primitive extremal subset and prove Theorems \ref{thm:main}, \ref{thm:main'} and some corollaries.
In \S\ref{sec:appl}, we prove Theorem \ref{thm:vol} and Theorem \ref{thm:nhd}.
In Appendix \ref{sec:A}, we prove Theorem \ref{thm:def}.
\end{org}

\begin{ack}
The author would like to thank Prof.\ Takao Yamaguchi for his advice and encouragement.
He is also grateful to the referee for the careful reading and useful comments.
\end{ack}

\section{Preliminaries}\label{sec:pre}

\subsection{Alexandrov spaces}\label{sec:alex}

We refer to \cite{BGP} and \cite{BBI} for the the basics of Alexandrov spaces.

For three points $p$, $q$ and $r$ in a metric space, consider a triangle in the $\kappa$-plane, the simply-connected complete surface of constant curvature $\kappa$, with sidelengths $|pq|$, $|qr|$ and $|rp|$.
We denote by $\tilde\angle qpr$ the angle opposite to the side $|qr|$ and call it a \textit{comparison angle} at $p$.
A complete metric space $M$ is called an \textit{Alexandrov space} with curvature $\ge\kappa$ if it satisfies the following conditions:
\begin{enumerate}
\item any two points in $M$ can be connected by a shortest path;
\item every point has a neighborhood $U$ such that for any four points $p,q,r,s\in U$, we have
\[\tilde\angle qpr+\tilde\angle rps+\tilde\angle spq\le2\pi.\]
\end{enumerate}

In this paper, we only deal with finite-dimensional Alexandrov spaces in the sense of Hausdorff dimension.
The Hausdorff dimension of a finite-dimensional Alexandrov space coincides with the topological dimension, and in particular, it is an integer.
From now, $M$ denotes an $n$-dimensional Alexandrov space with curvature $\ge\kappa$.
Furthermore, $\Sigma$ often denotes a space of curvature $\ge1$.

We assume that every shortest path is parametrized by arclength.
For two shortest paths $\gamma$, $\sigma$ in $M$ starting at $p$, the comparison angle $\tilde\angle\gamma(t)p\sigma(s)$ is nonincreasing in both $t$ and $s$.
Hence, the \textit{angle} $\angle(\gamma,\sigma):=\lim_{t,s\to0}\tilde\angle\gamma(t)p\sigma(s)$ always exists.
The angle becomes a metric on the equivalence classes of shortest paths starting at $p$.
The \textit{space of directions} $\Sigma_p$ at $p$ is the completion of this space.
The \textit{tangent cone} $T_p$ at $p$ is the Euclidean cone over $\Sigma_p$.
Then, $\Sigma_p$ (resp.\ $T_p$) is an $(n-1)$-dimensional compact (resp.\ $n$-dimensional noncompact) Alexandrov space of curvature $\ge1$ (resp.\ $\ge0$).
More generally, a space $\Sigma$ has curvature $\ge1$ if and only if its Euclidean cone $K(\Sigma)$ has curvature $\ge0$.
Note that the rescaled space $(\lambda M,p)$ converges to $(T_p,o)$ as $\lambda\to\infty$ in the pointed Gromov-Hausdorff topology, where $o$ denotes the vertex of the cone.

We will use the following notation:
For two points $p$, $q$ in $M$, $q'_p$ denotes one of the directions of shortest paths from $p$ to $q$.
Similarly, for a closed subset $A$ in $M$, $A'_p$ denotes the set of all directions of shortest paths from $p$ to $A$.
Moreover, we sometimes denote by $Q'_p$ the set of all directions of shortest paths from $p$ to $q$ by regarding $Q$ as a closed subset $\{q\}$.

It is well-known that the class of Alexandrov spaces with dimension $\le n$, curvature $\ge\kappa$ and diameter $\le D$ is compact with respect to the Gromov-Hausdorff distance.
Similarly, the class of pointed Alexandrov spaces with dimension $\le n$ and curvature $\ge\kappa$ is compact with respect to the pointed Gromov-Hausdorff topology.

\subsubsection{Strainers}\label{sec:str}

We refer to \cite{BGP} and \cite[Chap.\ 10]{BBI} for more details on strainers.

Throughout this paper, we assume that a positive number $\delta$ is less than some constant depending only on the dimension and the lower curvature bound of Alexandrov spaces, such as $n$ and $\kappa$.
We also denote by $\varkappa(\delta)$ various positive functions depending only on $n$ and $\kappa$ such that $\varkappa(\delta)\to0$ as $\delta\to0$.

\begin{dfn}\label{dfn:str}
Let $M$ be an Alexandrov space and $p\in M$.
A collection  $\{(a_i,b_i)\}_{i=1}^k$ of pairs of points in $M$ is called a \textit{$(k,\delta)$-strainer} at $p$ if it satisfies
\begin{gather*}
\tilde\angle a_ipb_i>\pi-\delta,\\
\tilde\angle a_ipa_j,\tilde\angle a_ipb_j,\tilde\angle b_ipb_j>\frac\pi2-\delta
\end{gather*}
for all $1\le i\neq j\le k$.
Such a point $p$ is called a \textit{$(k,\delta)$-strained point}.
The number $\min_{1\le i\le k}\{|pa_i|,|pb_i|\}$ is called the \textit{length} of the strainer $\{(a_i,b_i)\}_{i=1}^k$.

Furthermore, let $\Sigma$ be a space of curvature $\ge1$.
A collection $\{(A_i,B_i)\}_{i=1}^k$ of pairs of compact subsets in $\Sigma$ is called a \textit{$(k,\delta)$-strainer} of $\Sigma$ if it satisfies
\begin{gather*}
|A_iB_i|>\pi-\delta,\\
|A_iA_j|,|A_iB_j|,|B_iB_j|>\frac\pi2-\delta
\end{gather*}
for all $1\le i\neq j\le k$.
\end{dfn}

Clearly, $p\in M$ is $(k,\delta)$-strained if and only if $\Sigma_p$ has a $(k,\delta)$-strainer.

Let $X$ be a subset of $M$.
The \textit{$\delta$-strainer number} of $X$, denoted by $\delta\mathchar`-\str(X)$, is the supremum of numbers $k$ such that there exists a $(k,\delta)$-strained point in $X$.
Moreover, the \textit{local $\delta$-strainer number} of $X$ at $p\in X$, denoted by $\delta\mathchar`-\str_p(X)$, is the supremum of numbers $k$ such that every neighborhood of $p$ in $X$ contains a $(k,\delta)$-strained point.
It is known that the local $\delta$-strainer number of $M$ at any point is $n$ for any small $\delta>0$.

Let $M(k,\delta)$ be the set of $(k,\delta)$-strained points in $M$.
Set $M(k):=\bigcap_{\delta>0}M(k,\delta)$.
We denote by $\dim_{\mathrm H}$ the Hausdorff dimension.
Then, the following holds:

\begin{thm}[{\cite[10.6]{BGP}}]\label{thm:str}
For any $\delta>0$, the complement $M\setminus M(k,\delta)$ has locally finite $(k-1)$-dimensional Hausdorff measure.
In particular,
\begin{gather*}
\dim_{\mathrm H}\left(M\setminus M(k,\delta)\right)\le k-1,\\
\dim_{\mathrm H}\left(M\setminus M(k)\right)\le k-1.
\end{gather*}
\end{thm}

\begin{rem}\label{rem:str}
Recently, Li and Naber \cite{LiN} obtained a much stronger result on singular sets of Alexandrov spaces.
In particular, they gave an upper bound on the $(k-1)$-dimensional Hausdorff measure of $M\setminus M(k,\delta)$ and showed that $M\setminus M(k)$ is $(k-1)$-rectifiable.
See \cite{LiN} for more details.
\end{rem}

Let $\Sigma$ be a space of curvature $\ge1$.
If $(\xi,\eta)$ is an opposite pair in $\Sigma$, i.e.\ $|\xi\eta|=\pi$, then $\Sigma$ is isometric to the spherical suspension $S(Z)$ over a space $Z$ of curvature $\ge1$, where $Z=\{\zeta\in\Sigma\mid|\xi\zeta|=|\eta\zeta|=\pi/2\}$ (see \cite[10.4.3]{BBI}).
Therefore, if there exists a collection $\{(\xi_i,\eta_i)\}_{i=1}^k$ of pairs of points in $\Sigma$ such that 
\[|\xi_i\eta_i|=\pi,\quad|\xi_i\xi_j|=|\xi_i\eta_j|=|\eta_i\eta_j|=\pi/2\]
for all $1\le i\neq j\le k$, then $\Sigma$ is isometric to the $k$-fold spherical suspension
\[S^k(\hat\Sigma)=\overbrace{S(S(\dots S}^k(\hat\Sigma)\dots))\]
over some space $\hat\Sigma$ of curvature $\ge1$.
We refer to such a collection $\{(\xi_i,\eta_i)\}_{i=1}^k$ as an \textit{orthogonal $k$-frame} of $\Sigma$.
Note that $S^k(\hat\Sigma)$ is isometric to the spherical join of the standard unit sphere $\mathbb S^{k-1}$ and $\hat\Sigma$.
We often regard $\mathbb S^{k-1}$ as being isometrically embedded in $S^k(\hat\Sigma)$.
Note that $p\in M$ belongs to $M(k)$ if and only if $\Sigma_p$ has an orthogonal $k$-frame.
In particular, $M(n)$ coincides with the set of regular points in $M$.

\subsection{Extremal subsets}\label{sec:ext}

We refer to \cite{PP1} and \cite[\S4]{Pet2} for the basics of extremal subsets.
See also \cite{Per2} for the stratification of Alexandrov spaces.

Let $M$ be an Alexandrov space.
We denote by $\dist_q$ the distance function $|q\cdot|$ from $q\in M$.
Recall that the first variation formula states $d_p\dist_q=-\cos|Q'_p\cdot|$ on $\Sigma_p$ for any $p\in M\setminus\{q\}$, where $Q'_p$ denotes the set of all directions of shortest paths from $p$ to $q$.

\begin{dfn}\label{dfn:ext}
A closed subset $E$ of $M$ is said to be \textit{extremal} if it satisfies the following condition:
if $p\in E$ is a local minimum point of $\dist_q|_E$ for $q\notin E$, then it is a critical point of $\dist_q$, i.e.\ $d_p\dist_q\le 0$.
Note that the empty set and $M$ itself are regarded as extremal subsets of $M$.

Furthermore, let $\Sigma$ be a space of curvature $\ge1$.
A closed subset $F$ of $\Sigma$ is said to be \textit{extremal} if it satisfies the above condition, and in addition, $\diam\Sigma\le\pi/2$ when $F$ is empty and $\Sigma\subset\bar B(F,\pi/2)$ when $F$ is one point.
\end{dfn}

The dimension of an extremal subset is defined by the Hausdorff dimension.
We will see that it coincides with the strainer number and the topological dimension.
Note that the local dimension is not constant in general.

\begin{ex}\label{ex:ext}
\begin{enumerate}
\item A point whose space of directions has diameter not greater than $\pi/2$ is an extremal subset of dimension $0$.
\item The boundary of an Alexandrov space is an extremal subset of codimension $1$.
The set of all topological singularities (i.e.\ non-manifold points) is also an extremal subset.
\item Let a compact group $G$ act on $M$ by isometries.
Then, the quotient $M/G$ is an Alexandrov space with the same lower curvature bound and the natural projection of the fixed point set of $G$ is an extremal subset of $M/G$.
\end{enumerate}
\end{ex}

Let $E\subset M$ be an extremal subset and $p\in E$.
The \textit{space of directions} of $E$ at $p$, denoted by $\Sigma_p E$, is defined as the subset of $\Sigma_p$ consisting of all limit directions $\lim_{i\to\infty}(p_i)'_p$, where $p_i\in E$ converges to $p$.
Then, $\Sigma_pE$ is an extremal subset of $\Sigma_p$ (in the latter sense of Definition \ref{dfn:ext}).
The \textit{tangent cone} of $E$ at $p$, denoted by $T_pE$, is defined as the subcone $K(\Sigma_pE)$ of $T_p$.
Alternatively, $T_pE$ is defined as the limit of $\lambda E$ under the convergence $(\lambda M,p)\to(T_p,o)$, where $\lambda\to\infty$.
Then, $T_pE$ is also an extremal subset of $T_p$.
More generally, $F$ is an extremal subset of a space $\Sigma$ of curvature $\ge1$ if and only if $K(F)$ is an extremal subset of $K(\Sigma)$.
Furthermore, the limit of extremal subsets under the Gromov-Hausdorff convergence of Alexandrov spaces is an extremal subset of the limit space.

Let $d$ denote the metric of $M$.
An extremal subset $E\subset M$ has two metrics.
One is the \textit{extrinsic metric}, that is, the restriction of $d$.
The other is the \textit{intrinsic metric} $d_E$, that is, the infimum of the lengths of curves on $E$ with respect to $d$.
Clearly $d\le d_E$.
Furthermore, $d_E$ is locally bounded above by a constant multiple of $d$.
More precisely, if $\vol_n B(p,D)\ge v>0$ for $p\in E$, then there exists a constant $C=C(n,\kappa,D,v)$ such that $d_E\le Cd$ on $B(p,D)\cap E$.

Now, we explain the stratification of $M$ by its primitive extremal subsets.
It is known that the union, intersection, and closure of the difference of two extremal subsets are also extremal.
An extremal subset is said to be \textit{primitive} if it contains no proper extremal subsets with nonempty relative interior.
Then, any extremal subset can be uniquely represented as a union of primitive extremal subsets with nonempty relative interior.
Let $E$ be a primitive extremal subset of $M$.
The \textit{main part} $\mathring E$ of $E$ is defined as $E$ with all proper extremal subsets removed.
Then, $\mathring E$ is open and dense in $E$.
Clearly, the main parts of primitive extremal subsets of $M$ form a disjoint covering of $M$.
Furthermore,

\begin{thm}[{\cite[3.8]{PP1}}]
$\mathring E$ is a topological manifold.
In particular, the main parts of primitive extremal subsets of $M$ gives a stratification of $M$.
\end{thm}

\subsubsection{Gradient curves}\label{sec:grad}

We refer to \cite{Pet2} and \cite[\S3]{PP2} for more details on gradient curves.

Extremal subsets have an alternative definition in terms of gradient curves.
Let $p\in M$.
Recall that $d_x\dist_p=-\cos|P'_x\cdot|$ on $\Sigma_x$ for any $x\in M\setminus\{p\}$, where $P'_x$ denotes the set of all directions of shortest paths from $x$ to $p$.
The \textit{gradient} $\nabla_x\dist_p\in T_x$ of $\dist_p$ at $x$ is defined by
\begin{equation*}
\nabla_x\dist_p:=
\begin{cases}
\hfil o & \text{if}\ \max_{\Sigma_x}d_x\dist_p\le 0,\\
d_x\dist_p(\xi)\cdot\xi & \text{if}\ \max_{\Sigma_x}d_x\dist_p>0,
\end{cases}
\end{equation*}
where $\xi\in\Sigma_x$ is the unique maximum point of $d_x\dist_p$ on $\Sigma_x$.
A curve $\alpha$ in $M$ satisfying $\alpha^+(t)=\nabla_{\alpha(t)}\dist_p$ is called a \textit{gradient curve} of $\dist_p$.
It is known that there exists a unique maximal gradient curve of $\dist_p$ starting at $x\neq p$.
The following is an equivalent condition for extremality:

\begin{thm}[{\cite[4.1.2]{Pet2}}]\label{thm:grad}
A subset $E$ of $M$ is extremal if and only if for any $p\in M$ and $x\in E$ with $p\neq x$, the gradient curve of $\dist_p$ starting at $x$ remains in $E$.
\end{thm}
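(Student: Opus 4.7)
My plan is to prove each direction separately by reducing to the first variation formula $d_y \dist_p(\xi) = -\cos |P'_y, \xi|$ and interpreting the extremality criterion $d_y \dist_q \le 0$ as the vanishing of the gradient $\nabla_y \dist_q$.

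\textbf{Forward direction.} Assume $E$ is extremal. The crucial ingredient is the tangency lemma: for every $y \in E$ and every $p \neq y$, the gradient $\nabla_y \dist_p$ belongs to the subcone $T_y E \subset T_y$, equivalently its direction $\xi_0 := \nabla_y \dist_p / |\nabla_y \dist_p|$ lies in $\Sigma_y E$ when nonzero. Once tangency is established, the gradient curve is realized as the limit of broken-geodesic (Euler) approximations whose vertices are placed in $E$: at each step, advance by a short shortest path in the direction realizing the gradient, which by tangency lies in $T_{\alpha(t_i)} E$, so the next vertex can be chosen in $E$; semiconcavity of $\dist_p$ controls the error uniformly, and the limit remains in the closed set $E$. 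To prove the tangency lemma I would argue by contraposition: if $\xi_0 \notin \Sigma_y E$, use the isolation of $\xi_0$ from the closed subset $\Sigma_y E \subset \Sigma_y$, combined with the structure of $\Sigma_y E$ as an extremal subset of $\Sigma_y$, to produce an auxiliary point $q \in M \setminus E$ near $y$ such that $|q'_y, \eta| \ge \pi/2$ for every $\eta \in \Sigma_y E$ (so that $y$ is a local minimum of $\dist_q|_E$) while simultaneously $d_y \dist_q(\xi_0) > 0$ (so that $\nabla_y \dist_q \neq 0$), contradicting extremality.

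\textbf{Reverse direction.} Assume gradient curves of $\dist_p$ starting in $E$ remain in $E$ for every $p \neq x$. Suppose for contradiction that $x \in E$ is a local minimum of $\dist_q|_E$ for some $q \notin E$, yet $\nabla_x \dist_q \neq 0$. The hypothesis applied to $\dist_q$ shows that the gradient curve $\alpha$ of $\dist_q$ from $x$ stays in $E$, so $\xi_0 := \nabla_x \dist_q / |\nabla_x \dist_q|$ lies in $\Sigma_x E$. Next, apply the hypothesis to $\dist_p$ for auxiliary points $p \in M$ with $p'_x$ approaching $\xi_0$: the initial gradient directions $\xi_0(p)$, which maximize $|p'_x, \cdot|$ over $\Sigma_x$, also lie in $\Sigma_x E$, and in the limit yield a direction $\eta \in \Sigma_x E$ realizing $\max_{\Sigma_x} |\xi_0, \cdot|$. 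Combining the first-variation consequence $|q'_x, \eta| \ge \pi/2$ of the local-minimum condition with the maximality property of $\eta$ and the suspension structure of $\Sigma_x$ near an antipodal pair (which applies since the curvature of $\Sigma_x$ is $\ge 1$), the fact that $|q'_x, \xi_0| > \pi/2$ forces a geometric contradiction.

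\textbf{Main obstacle.} The hardest step is the tangency lemma in the forward direction. The difficulty lies in choosing $q$ so that its direction $q'_y$ is simultaneously at angle $\ge \pi/2$ from every element of the closed set $\Sigma_y E$ (to make $y$ a local minimum of $\dist_q|_E$) and at angle $> \pi/2$ from $\xi_0$ (so $d_y \dist_q(\xi_0) > 0$). This requires fine control over the geometry of $\Sigma_y E$ inside $\Sigma_y$, which is typically handled by induction on dimension, using the fact, recorded earlier in the paper, that $\Sigma_y E$ is itself an extremal subset of $\Sigma_y$. Once the tangency lemma is available, the Euler approximation of the gradient curve and the extraction of the contradiction in the reverse direction both proceed by standard arguments from the theory of gradient flows of semiconcave functions on Alexandrov spaces.
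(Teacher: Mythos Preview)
The paper does not prove this theorem: it is quoted from Petrunin's survey \cite[4.1.2]{Pet2} as background, with no argument given. There is therefore nothing in the paper to compare your proposal against.

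Assessing your proposal on its own merits: the overall architecture (tangency of the gradient to $T_yE$, followed by an Euler-type approximation for the forward direction; deducing $\xi_0\in\Sigma_xE$ from invariance for the reverse direction) is the standard one, and you correctly identify the tangency lemma as the crux. However, two steps are genuinely incomplete. In the forward direction, your contraposition for tangency asserts that ``$|q'_y,\eta|\ge\pi/2$ for every $\eta\in\Sigma_yE$'' forces $y$ to be a local minimum of $\dist_q|_E$; this is only a first-order necessary condition and does not by itself yield a local minimum, so the extremality hypothesis cannot be invoked as stated. The usual fix is to work inductively inside $\Sigma_y$ (using that $\Sigma_yE$ is extremal there) rather than to manufacture such a $q$ in $M$. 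In the reverse direction, your final ``geometric contradiction'' relies on a suspension structure near an antipodal pair, but nothing in the setup produces a pair at distance $\pi$; you only obtain $\eta\in\Sigma_xE$ maximizing $|\xi_0,\cdot|$, and the inequalities $|q'_x,\xi_0|>\pi/2$ together with $|q'_x,\eta|\ge\pi/2$ are mutually consistent, so no contradiction follows. You would need a sharper argument (for instance, comparing $\dist_q$ along the gradient curve of a suitably chosen $\dist_p$ and using semiconcavity quantitatively) to close this gap.
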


\subsubsection{Quasigeodesics}\label{sec:qg}

We refer to \cite{PP2} and \cite[\S5]{Pet2} for more details on quasigeodesics.

For a $1$-Lipschitz curve $c(t)$ and a point $p$ in $M$, consider a triangle on the $\kappa$-plane with sidelengths $|pc(t_1)|$, $|pc(t_2)|$ and $|t_1-t_2|$.
We denote by $\tilde\angle pc(t_1)\!\smile\!c(t_2)$ the angle opposite to the side $|pc(t_2)|$.
If such a triangle does not exist, then we define $\tilde\angle:=0$.

A curve $\gamma(t)$ in $M$ is called a \textit{quasigeodesic} if it is parametrized by arclength and satisfies the following monotonicity:
for any $p\in M$ and $t\in\mathbb R$, $\tilde\angle p\gamma(t)\!\smile\!\gamma(t+\tau)$ is nonincreasing in $0<\tau<\pi/\sqrt\kappa$ (where $\pi/\sqrt\kappa:=\infty$ for $\kappa\le0$).

The following is a kind of totally geodesic property of extremal subsets, which plays a key role in the proof of Theorem \ref{thm:main}.

\begin{thm}[Generalized Lieberman theorem {\cite[5.3]{PP1}}, {\cite[1.1]{Pet1}}, {\cite[2.3.1]{Pet2}}]\label{thm:qg}
Any shortest path with respect to the intrinsic metric of an extremal subset is a quasigeodesic in the ambient Alexandrov space.
\end{thm}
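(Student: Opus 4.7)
My plan is to derive the quasigeodesic property from the gradient-flow characterization of extremality (Theorem \ref{thm:grad}). Recall that a 1-Lipschitz arclength-parametrized curve $\gamma$ is a quasigeodesic iff along $\gamma$ every distance function $|p\cdot|$ satisfies the same upper-barrier inequality as on a $\kappa$-plane geodesic. By a standard semiconcave-function reformulation, this decomposes into (i) a first-order ``no corner opening away from $p$'' condition at every interior point, plus (ii) the ambient Alexandrov comparison along geodesic approximations of $\gamma$. The 1-Lipschitz property of $\gamma$ in the ambient metric is automatic from $d\le d_E$.

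The heart of the argument is condition (i): for every interior $t_0$, every $p\in M$, and every right/left tangent direction $\xi^{\pm}\in\Sigma_{\gamma(t_0)}$ of $\gamma$,
\[
\cos\angle(\xi^+,p'_{\gamma(t_0)})+\cos\angle(\xi^-,p'_{\gamma(t_0)})\ge 0.
\]
Suppose it fails, so both cosines are strictly negative. The gradient flow of $\dist_p$ starts at $\gamma(t_0)$ and, by Theorem \ref{thm:grad}, stays in $E$; its initial velocity $\nabla\dist_p$ points ``away from $p$'' and should yield a first-variation gain in each of the $\xi^+$ and $\xi^-$ directions. Displacing $\gamma(t_0)$ by a short time $\varepsilon$ along this flow to a point $x\in E$, the first-variation formula then gives a decrease of the extrinsic chord distances $|x\gamma(t_0\pm\sigma)|$ linear in $\varepsilon$, uniformly for small $\sigma>0$. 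Because the intrinsic and extrinsic metrics of $E$ agree infinitesimally near $\gamma(t_0)$ (as $T_{\gamma(t_0)}E$ embeds isometrically in $T_{\gamma(t_0)}$), the same drop holds for $d_E(x,\gamma(t_0\pm\sigma))$ up to higher order in $\sigma$. Splicing intrinsic shortest paths in $E$ from $\gamma(0)$ to $\gamma(t_0-\sigma)$, through $x$, and on to $\gamma(\ell)$ produces an intrinsic curve strictly shorter than $\gamma$, contradicting its minimality.

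Once (i) is established at every interior point, step (ii) is routine: approximate $\gamma$ by broken ambient shortest paths with vertices on $\gamma$; the $\kappa$-comparison applied to each geodesic piece gives the required $\kappa$-upper-barrier inequality for $|p\cdot|$, and (i) ensures that the inequality survives passage through each corner. Taking the limit yields the monotonicity of $\tilde\angle p\gamma(t)\!\smile\!\gamma(t+\tau)$ in $\tau$ for every $p$, and hence the quasigeodesic property.

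The main obstacle is the length estimate in (i). First, the gradient-flow velocity $\nabla_{\gamma(t_0)}\dist_p$ lives in the possibly singular tangent cone $T_{\gamma(t_0)}$, so the assertion that it has positive inner product with both $\xi^{\pm}$ requires honest use of the variational definition of the gradient rather than a naive ``direction opposite to $p$'' picture. Second, one must choose $\varepsilon$ and $\sigma$ simultaneously small so that the linear first-order gain in the chord dominates both the quadratic-in-$\varepsilon$ error from the gradient flow and the $o(\sigma)$ discrepancy between $d$ and $d_E$ at scale $\sigma$; this is where the local bi-Lipschitz equivalence of $d$ and $d_E$ recalled in \S\ref{sec:ext} must be upgraded to an asymptotic identity as $\sigma\to 0$.
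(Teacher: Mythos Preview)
The paper does not prove this theorem; it is quoted from \cite{PP1}, \cite{Pet1}, \cite{Pet2} and used as a black box. So there is no ``paper's own proof'' to compare against, and your proposal must stand on its own.

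Your outline has a genuine circularity. In condition (i) you freely use the right/left tangent directions $\xi^{\pm}\in\Sigma_{\gamma(t_0)}$ of the intrinsic shortest path $\gamma$, and later you invoke an ``asymptotic identity'' between $d$ and $d_E$ near $\gamma(t_0)$. Neither of these is available a priori. The existence of unit tangent vectors for $\gamma$ in $\Sigma_{\gamma(t_0)}$ is precisely what the quasigeodesic property provides (the paper itself says, right after the statement, that ``$\gamma^+(0)\in\Sigma_pE$ is uniquely defined \emph{since a quasigeodesic has a unique right tangent vector}''). Likewise, the only place in this paper where $d_E/d\to 1$ is established is Corollary~\ref{cor:met}, which is proved \emph{using} Theorem~\ref{thm:qg} and only near strained points. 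The bi-Lipschitz bound $d\le d_E\le Cd$ from \S\ref{sec:ext} cannot be ``upgraded'' to an infinitesimal identity without already knowing something equivalent to the theorem. Consequently your shortcut estimate---transferring a first-variation drop in $d(x,\gamma(t_0\pm\sigma))$ to a drop in $d_E$---has no foundation.

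The actual proofs in \cite{PP1,Pet1,Pet2} avoid this trap by never invoking pointwise tangents of $\gamma$ or a first-variation formula for $d_E$. The robust replacement is that \emph{lengths} of curves agree in $d$ and $d_E$ (an elementary fact: $L_M(c)=L_E(c)$ for any curve $c\subset E$, since $d\le d_E$ while $d_E$ is the induced length metric). One then uses the global contraction property of gradient flows of $\lambda$-concave functions, $|\Phi^t(x)\Phi^t(y)|\le e^{\lambda t}|xy|$, together with Theorem~\ref{thm:grad}, to push the whole curve $\gamma$ by $\Phi^t$ inside $E$; if the $\lambda$-concavity of $f\circ\gamma$ failed on some subarc, a development/bow argument shows that a suitable concatenation through the flowed curve has strictly smaller \emph{length} in $E$, contradicting minimality. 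This is a length-level argument, not a direction-level one, and that is exactly what makes it non-circular.
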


Let $E\subset M$ be an extremal subset and $p,q\in E$.
Let $\gamma$ be a shortest path from $p$ to $q$ with respect to the intrinsic metric of $E$.
Since a quasigeodesic has a unique right tangent vector, the direction $\gamma^+(0)\in\Sigma_pE$ is uniquely defined.
We denote by $q^\circ_p$ one of such directions.
Furthermore, we define $\tilde\angle xp\!\smile\!q:=\tilde\angle x\gamma(0)\!\smile\!\gamma(|pq|_E)$ for $x\in M$ (in other words, the angle opposite to the side $|xq|$ of a triangle on the $\kappa$-plane with sidelengths $|xp|$, $|pq|_E$ and $|xq|$).
Then, the above monotonicity implies $\tilde\angle xp\!\smile\!q\le\angle(x'_p,q^\circ_p)$.

\section{Basic properties}\label{sec:bas}

In this section, we study the relation between extremal subsets and strainers, define regular points of extremal subsets and prove Theorems \ref{thm:main} and \ref{thm:main'}.

Recall that $\delta$ denotes a positive number less than some constant depending only on $n$ and $\kappa$.
Furthermore, $\varkappa(\delta)$ denotes various positive functions depending only on $n$ and $\kappa$ such that $\varkappa(\delta)\to0$ as $\delta\to0$.
We always assume that a lower bound $\ell$ of the lengths of strainers is not greater than $1$.
Indeed, all our arguments using strainers are local.

Let $\varepsilon$ be a small positive number.
Let $X$ and $Y$ be metric spaces.
A map $f:X\to Y$ is called an \textit{$\varepsilon$-almost isometry} if it is surjective and $||f(x)f(y)|/|xy|-1|<\varepsilon$ for any two distinct points $x,y\in X$.
Let $U$ be an open subset of $X$.
A map $f:U\to Y$ is called a \textit{$(1-\varepsilon)$-open map} if for any $x\in U$ and $v\in Y$ such that $\bar B(x,(1-\varepsilon)^{-1}|f(x)v|)\subset U$, there exists $y\in U$ such that $f(y)=v$ and $(1-\varepsilon)|xy|\le|f(x)v|$.

We first show that the distance map from a strainer is open on an extremal subset (Proposition \ref{prop:open}).
The following lemma gives a sufficient condition for $(1-\varepsilon)$-openness.
This is a special case of \cite[1.2]{L}.

\begin{lem}\label{lem:open}
Let $X$ be a proper metric space (i.e.\ every closed ball is compact) and let $f:U\to\mathbb R^k$ be a continuous map on an open subset $U$ of $X$.
Suppose that for any $p\in U$ and $\xi\in \mathbb S^{k-1}\subset\mathbb R^k$, there exists $q\in U$ arbitrarily close to $p$ such that
\[\left|\frac{f(q)-f(p)}{|pq|}-\xi\right|<\varepsilon.\]
Then, $f$ is $(1-\varepsilon)$-open on $U$.
\end{lem}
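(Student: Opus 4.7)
The plan is to exhibit a preimage of $v$ by minimizing $|f(\cdot)v|$ over a carefully chosen compact set. Given $x\in U$ and $v\in\mathbb R^k$ with $r:=|f(x)v|$ and $\bar B(x,r/(1-\varepsilon))\subset U$, I would first introduce
\[
S:=\{\,y\in U : (1-\varepsilon)|xy|+|f(y)v|\le r\,\}.
\]
Observe that $S$ contains $x$, is closed by continuity of $f$, and lies inside $\bar B(x,r/(1-\varepsilon))$; since $X$ is proper, $S$ is therefore compact, so $|f(\cdot)v|$ attains its minimum on $S$ at some point $y_\ast$. The goal is to show $f(y_\ast)=v$, for then $y_\ast\in S$ immediately yields $(1-\varepsilon)|xy_\ast|\le r$, which is exactly the required conclusion.

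To establish $f(y_\ast)=v$, I would argue by contradiction: assume $f(y_\ast)\ne v$ and set $\xi:=(v-f(y_\ast))/|v-f(y_\ast)|\in\mathbb S^{k-1}$. Applying the hypothesis at $y_\ast$ with this direction produces a point $q\in U$ with $|y_\ast q|$ as small as we wish and
\[
\left|\frac{f(q)-f(y_\ast)}{|y_\ast q|}-\xi\right|<\varepsilon.
\]
Requiring in addition $|y_\ast q|\le|v-f(y_\ast)|$, a direct vector computation (writing $f(q)-v=(|y_\ast q|-|v-f(y_\ast)|)\xi+e$ with $|e|<\varepsilon|y_\ast q|$) yields
\[
|f(q)v|\le|v-f(y_\ast)|-(1-\varepsilon)|y_\ast q|.
\]
Combining this with the triangle inequality $|xq|\le|xy_\ast|+|y_\ast q|$ gives
\[
(1-\varepsilon)|xq|+|f(q)v|\le(1-\varepsilon)|xy_\ast|+|v-f(y_\ast)|\le r,
\]
so $q\in S$ while $|f(q)v|<|f(y_\ast)v|$, contradicting the minimality of $y_\ast$.

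The whole argument is essentially a one-step greedy descent, so the only delicate point is the design of the functional defining $S$: the decrease in $|f(\cdot)v|$ produced by a small step from $y_\ast$ towards $v$ must exactly absorb the extra cost in $(1-\varepsilon)|x\cdot|$, which is what forces both summands in the definition of $S$ to carry the same weight $1-\varepsilon$. Once $S$ is set up this way, properness supplies the minimizer by compactness and the hypothesis forces it to sit over $v$; I do not foresee any real obstacle beyond this bookkeeping.
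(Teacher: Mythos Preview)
Your proposal is correct and is essentially the same argument as the paper's: your set $S$ coincides with the paper's set $A=\{q\in X:(1-\varepsilon)|pq|\le|f(p)v|-|f(q)v|\}$, and both proofs minimize $|f(\cdot)v|$ over this compact set and use the hypothesis at the minimizer in the direction $\xi=(v-f(y_\ast))/|v-f(y_\ast)|$ to derive the same contradiction via the same vector estimate.
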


\begin{proof}
Let $p\in U$ and $v\in\mathbb R^k$ be such that $\bar B(p,(1-\varepsilon)^{-1}|f(p)v|)\subset U$.
Consider a closed subset
\[A:=\bigl\{q\in X\mid (1-\varepsilon)|pq|\le|f(p)v|-|f(q)v|\bigr\}.\]
Note that $A\subset U$.
We show $v\in f(A)$.
Suppose the contrary and choose $q\in A$ such that $|f(q)v|=|f(A)v|>0$ by the properness of $X$.
Applying the assumption to $q$ and $\xi:=(v-f(q))/|f(q)v|$, we find $q_1\in U$ such that
\[\left|\frac{f(q_1)-f(q)}{|qq_1|}-\xi\right|<\varepsilon\quad\text{and}\quad0<|qq_1|\le|f(q)v|.\]
Then, we have
\begin{align*}
|f(q_1)v|&=|(v-f(q))-(f(q_1)-f(q))|\\
&=|(|f(q)v|\xi-|qq_1|\xi)+(|qq_1|\xi-(f(q_1)-f(q)))|\\
&\le(|f(q)v|-|qq_1|)+\varepsilon|qq_1|\\
&=|f(q)v|-(1-\varepsilon)|qq_1|.
\end{align*}
This implies
\[(1-\varepsilon)|pq_1|\le(1-\varepsilon)(|pq|+|pq_1|)\le|f(p)v|-|f(q_1)v|.\]
Therefore, $q_1\in A$ and $|f(q_1)v|<|f(q)v|$.
This contradicts the choice of $q$.
\end{proof}

The next lemma guarantees that an extremal subset almost contains the directions of strainers.
Recall that we denote by $S^k(\hat\Sigma)$ the $k$-fold spherical suspension over a space $\hat\Sigma$ of curvature $\ge 1$ and by $\{(\xi_i,\eta_i)\}_{i=1}^k$ an orthogonal $k$-frame of $S^k(\hat\Sigma)$.
We regard the standard unit sphere $\mathbb S^{k-1}$ as being embedded in $S^k(\hat\Sigma)$ (see \S\ref{sec:str}).

\begin{lem}\label{lem:susp}
Let $\Sigma$ be an $(n-1)$-dimensional Alexandrov space of curvature $\ge1$ and $F$ an extremal subset of $\Sigma$.
Suppose that $\Sigma$ has a $(k,\delta)$-strainer $\{(A_i,B_i)\}_{i=1}^k$.
Then, there exist an Alexandrov space $\hat\Sigma$ of curvature $\ge1$ and dimension $\le n-k-1$ (possibly empty) and a $\varkappa(\delta)$-Hausdorff approximation $\varphi:\Sigma\to S^k(\hat\Sigma)$ such that
\[\varphi(A_i)=\xi_i,\quad\varphi(B_i)=\eta_i\]
for all $1\le i\le k$ and the $\varkappa(\delta)$-neighborhood of $\varphi(F)$ contains $\mathbb S^{k-1}$.
\end{lem}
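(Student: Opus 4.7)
First I would construct $\varphi$ and $\hat\Sigma$ by iterating the standard spherical suspension lemma (e.g.\ \cite[10.4.3]{BBI}). Each pair $(A_i,B_i)$ with $|A_iB_i|>\pi-\delta$ makes $\Sigma$ a $\varkappa(\delta)$-Hausdorff approximation of a spherical suspension sending $A_i,B_i$ to the two vertices; the near-orthogonality of the pairs lets the $k$ suspensions stack to produce a $\varkappa(\delta)$-Hausdorff approximation $\varphi:\Sigma\to S^k(\hat\Sigma)$ with $\varphi(A_i)=\xi_i$, $\varphi(B_i)=\eta_i$, and $\hat\Sigma$ an Alexandrov space of curvature $\ge1$ and dimension $\le n-k-1$. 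This part does not use the extremality of $F$.

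The main content is the density claim for $\varphi(F)$, which I would prove by induction on $k$ using Theorem \ref{thm:grad}. The case $k=0$ is vacuous. For the inductive step, fix $\zeta\in\mathbb S^{k-1}$ and write $\zeta=\cos s\cdot\eta+\sin s\cdot\mu$ with $\eta\in\mathbb S^{k-2}$ (the subsphere from the first $k-1$ pairs), $\mu\in\{\xi_k,\eta_k\}$, and $s\in[0,\pi/2]$. If $s=0$, apply the inductive hypothesis to the $(k-1,\delta)$-strainer $\{(A_i,B_i)\}_{i=1}^{k-1}$. If $s>0$, use the inductive hypothesis to find $p_0\in F$ with $\varphi(p_0)$ within $\varkappa(\delta)$ of $\eta$, and pick $q\in\Sigma$ with $\varphi(q)$ within $\varkappa(\delta)$ of $-\zeta$ (such $q$ exists because $\varphi$ is a Hausdorff approximation). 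By Theorem \ref{thm:grad} the gradient curve of $\dist_q$ starting at $p_0$ remains in $F$; since $\dist_q\le\pi$ is nondecreasing along the flow, the curve converges to a critical point $z\in F$ of $\dist_q$ in $\Sigma$.

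In the model $S^k(\hat\Sigma)$ a direct computation with the spherical join formula shows that the only critical points of $\dist_{-\zeta}$ are $-\zeta$ (value $0$) and $\zeta$ (value $\pi$). By the standard stability of critical values of distance functions under Gromov-Hausdorff convergence of Alexandrov spaces with a uniform lower curvature bound, the critical value $\dist_q(z)$ must lie within $\varkappa(\delta)$ of $0$ or $\pi$. The join formula also gives $|\eta,-\zeta|=\pi-s\ge\pi/2$, so $|qp_0|\ge\pi/2-\varkappa(\delta)$; monotonicity along the flow then yields $|qz|\ge\pi/2-\varkappa(\delta)$, which rules out $\dist_q(z)\approx0$ and forces $\dist_q(z)\approx\pi$, i.e., $\varphi(z)$ within $\varkappa(\delta)$ of $\zeta$, as required. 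The hardest step is the stability statement identifying critical values of $\dist_q$ in $\Sigma$ with those of $\dist_{-\zeta}$ in the model; the rest is a routine combination of the suspension lemma, Theorem \ref{thm:grad}, and monotonicity of distance along gradient flows, together with a small perturbation to ensure $q\ne p_0$ (automatic here since $|qp_0|\ge\pi/2-\varkappa(\delta)>0$).
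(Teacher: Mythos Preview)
Your approach differs substantially from the paper's, and there is a genuine gap in the base case.

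\textbf{The gap.} Your induction starts at $k=0$ (vacuous), but the step to $k=1$ does not go through as written: when $k=1$ there is no $\eta\in\mathbb S^{-1}$, so you cannot invoke the inductive hypothesis to produce a starting point $p_0\in F$ with $|qp_0|\ge\pi/2-\varkappa(\delta)$. Without that lower bound you cannot rule out that the gradient flow of $\dist_q$ limits to a critical point $z$ with $|qz|$ close to $0$: stability of critical points under Gromov--Hausdorff convergence does \emph{not} exclude critical points $z_j$ with $|q_jz_j|\to0$, since the limiting point would be $-\zeta$ itself, where $\dist_{-\zeta}$ is undefined. The case $k=1$ therefore needs a separate argument---for instance, if all of $F$ lay within $\varkappa(\delta)$ of $B_1$, then in the limit $F=\{\eta_1\}$, contradicting the one-point extremality condition in Definition~\ref{dfn:ext} because $|\xi_1\eta_1|=\pi>\pi/2$. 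This is fixable, but it is not in your write-up, and once you are invoking limit arguments of that kind the induction loses its point.

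\textbf{Comparison with the paper.} The paper argues entirely by contradiction and is much shorter. Assume the conclusion fails along a sequence $\delta_j\to0$; pass to a Gromov--Hausdorff limit where $\Sigma$ is exactly $S^k(\hat\Sigma)$ and $F$ is an extremal subset of it. Now work in the Euclidean cone $K(\Sigma)=\mathbb R^k\times K(\hat\Sigma)$: since $F\neq\emptyset$ (else $\diam\Sigma\le\pi/2$, contradicting the strainer), the vertex $o$ lies in the extremal subset $K(F)$, and by Theorem~\ref{thm:grad} every gradient curve of $\dist_p$ starting at $o$ stays in $K(F)$. For $p=(v,o)\in\mathbb R^k\times\{o\}$ this curve is the ray $t\mapsto(-tv/|v|,o)$, so $\mathbb R^k\times\{o\}\subset K(F)$, i.e.\ $\mathbb S^{k-1}\subset F$---contradiction. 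This single cone trick replaces your induction, the convergence of the flow to a critical point, and the stability-of-critical-values step you flagged as hardest.
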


\begin{proof}
We prove it by contradiction.
Suppose that the claim does not hold for $\Sigma_j$, $F_j$, $\{(A_{ij},B_{ij})\}_{i=1}^k$ and $\delta_j\to0$, where $j\to\infty$.
Passing to a subsequence, we may assume that $\Sigma_j$ converges to an Alexandrov space $\Sigma$ of curvature $\ge1$ and dimension $\le n-1$ and that $F_j$ converges to an extremal subset $F$ of $\Sigma$.
We may further assume that $(k,\delta_j)$-strainers $\{(A_{ij},B_{ij})\}_{i=1}^k$ converge to an orthogonal $k$-frame $\{(\xi_i,\eta_i)\}_{i=1}^k$ of $\Sigma$.
Therefore, $\Sigma$ is isometric to $S^k(\hat\Sigma)$ for some $\hat\Sigma$ of curvature $\ge1$ and dimension $\le n-k-1$.
Consider the Euclidean cone $K(\Sigma)=\mathbb R^k\times K(\hat\Sigma)$.
Then, $K(F)$ is an extremal subset of $K(\Sigma)$.
Since every gradient curve starting at the vertex of $K(\Sigma)$ remains in $K(F)$, we see that $\mathbb R^k\times\{o\}\subset K(F)$, where $o$ denotes the vertex of $K(\hat\Sigma)$.
In other words, $\mathbb S^{k-1}\subset F$.
This is a contradiction.
\end{proof}

Now, we can prove the following:

\begin{prop}\label{prop:open}
Let $M$ be an Alexandrov space and $p\in M$.
Let $\{(a_i,b_i)\}_{i=1}^k$ be a $(k,\delta)$-strainer at $p$ with length $>\ell$.
Then, $f=(|a_1\cdot|,\dots,|a_k\cdot|):M\to \mathbb R^k$ is $(1+\varkappa(\delta))$-Lipschitz and $(1-\varkappa(\delta))$-open on $B(p,\ell\delta)$.

Moreover, let $E$ be an extremal subset of $M$ and assume $p\in E$.
Then, $f|_E$ is also $(1-\varkappa(\delta))$-open on $B(p,\ell\delta)\cap E$ with respect to the extrinsic metric of E.
\end{prop}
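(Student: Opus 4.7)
The plan is to establish the Lipschitz bound from the first variation formula and to deduce both openness statements by verifying the hypothesis of Lemma \ref{lem:open}; the infinitesimal directions required there are extracted from Lemma \ref{lem:susp}. For the Lipschitz part, observe that for any $y\in B(p,\ell\delta)$ monotonicity of comparison angles implies $\{(a_i,b_i)\}_{i=1}^k$ is still a $(k,\varkappa(\delta))$-strainer at $y$ with length comparable to $\ell$. Lemma \ref{lem:susp} applied with $F=\Sigma_y$ then produces an almost-suspension structure $\varphi\colon\Sigma_y\to S^k(\hat\Sigma)$ sending each $(a_i)'_y$ to the $i$-th pole, which forces $\sum_{i=1}^k\cos^2\angle(v,(a_i)'_y)\le 1+\varkappa(\delta)$ for every unit $v\in\Sigma_y$. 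Combined with the first variation formula this gives a pointwise differential norm $\le 1+\varkappa(\delta)$, so integration along a shortest path yields $(1+\varkappa(\delta))$-Lipschitzness of $f$ on $B(p,\ell\delta)$.

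For the openness on $M$, I would verify the hypothesis of Lemma \ref{lem:open}. Fix $x\in B(p,\ell\delta)$ and $\xi=(\xi^1,\dots,\xi^k)\in\mathbb S^{k-1}$, and apply Lemma \ref{lem:susp} to $\Sigma_x$ with $F=\Sigma_x$ to obtain a Hausdorff approximation $\varphi\colon\Sigma_x\to S^k(\hat\Sigma)$ sending $(a_i)'_x$ to the $i$-th pole. Since $-\xi$, viewed inside the $\mathbb S^{k-1}$ factor of $S^k(\hat\Sigma)$, lies in the image of $\varphi$ up to error $\varkappa(\delta)$, I can choose $v\in\Sigma_x$ with $\cos\angle(v,(a_i)'_x)$ within $\varkappa(\delta)$ of $-\xi^i$. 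Moving from $x$ by a small distance $\tau$ along a shortest path in a direction near $v$ produces a point $y$ for which the first variation formula gives $(f(y)-f(x))/|xy|$ within $\varkappa(\delta)$ of $\xi$, verifying the hypothesis of Lemma \ref{lem:open}.

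The extrinsic openness of $f|_E$ follows the same recipe but with $F=\Sigma_xE$ in Lemma \ref{lem:susp}, which is extremal in $\Sigma_x$. The conclusion there—that $\mathbb S^{k-1}$ lies in the $\varkappa(\delta)$-neighborhood of $\varphi(\Sigma_xE)$—lets me take $v$ already in $\Sigma_xE$. By definition of $\Sigma_xE$ there is a sequence $p_i\in E$ with $p_i\to x$ and $(p_i)'_x\to v$, and the first variation formula applied to this sequence yields $(f(p_i)-f(x))/|xp_i|\to\xi$ up to $\varkappa(\delta)$. These $p_i$ then supply the test points required by Lemma \ref{lem:open}, applied to the proper metric space $E$ with its extrinsic metric, and we conclude $(1-\varkappa(\delta))$-openness of $f|_E$ on $B(p,\ell\delta)\cap E$.

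The main obstacle is conceptual rather than technical: one cannot move by a prescribed positive distance inside $E$ in a chosen direction, only exploit the limiting behavior recorded by $\Sigma_xE$. The feature of Lemma \ref{lem:open} that rescues the argument is its demand for only \emph{arbitrarily close} witnesses, which is exactly what $\Sigma_xE$ provides. Once this is noticed, tracking the $\varkappa(\delta)$-error through the Hausdorff approximation of Lemma \ref{lem:susp} and the first variation formula is straightforward bookkeeping.
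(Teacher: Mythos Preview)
Your proposal is correct and follows essentially the same strategy as the paper: both arguments obtain the almost-suspension structure on $\Sigma_x$ from Lemma~\ref{lem:susp} and then verify the hypothesis of Lemma~\ref{lem:open} via the first variation formula, handling the extremal case by taking $F=\Sigma_xE$ so that the required direction $v$ already lies in $\Sigma_xE$. The only difference is in the Lipschitz estimate: you bound the pointwise differential norm and integrate along a shortest path, whereas the paper argues directly with two points $x,y$, combining the cosine rule with the strainer property $|\tilde\angle a_ixy-\angle a_ixy|<\varkappa(\delta)$ (equation~\eqref{eq:str}) to compare $|f(x)-f(y)|^2/|xy|^2$ with $\sum_i\cos^2\angle a_ixy$; both routes are standard and yield the same bound.
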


\begin{proof}
First, we show the $(1+\varkappa(\delta))$-Lipschitzness of $f$.
Note that $\{(a_i,b_i)\}_{i=1}^k$ is also a $(k,\varkappa(\delta))$-strainer for every point in $B(p,\ell\delta)$.
For any $x,y\in B(p,\ell\delta)$, we have
\[\left|\frac{|a_ix|-|a_iy|}{|xy|}-\cos\tilde\angle a_ixy\right|<\varkappa(\delta)\]
by the cosine formula on the $\kappa$-plane.
Thus,
\[\left|\frac{|f(x)f(y)|^2}{|xy|^2}-\sum_{i=1}^k\cos^2\tilde\angle a_ixy\right|<\varkappa(\delta).\]
Furthermore, by the property of a strainer, we have
\begin{equation}\label{eq:str}
\left|\tilde\angle a_ixy-\angle a_ixy\right|<\varkappa(\delta)
\end{equation}
(see \cite[5.6]{BGP}, \cite[10.8.13]{BBI}).
Let $(A_i)'_x$ and $(B_i)'_x$ be the sets of all directions of shortest paths from $x$ to $a_i$ and $b_i$, respectively.
Then, by Lemma \ref{lem:susp}, there exists a $\varkappa(\delta)$-Hausdorff approximation $\varphi:\Sigma_x\to S^k(\hat\Sigma)$ such that $\varphi((A_i)'_x)=\xi_i$ and $\varphi((B_i)'_x)=\eta_i$.
Hence, we have
\[\sum_{i=1}^k\cos^2\angle a_ixy<1+\varkappa(\delta)\]
since $\sum_{i=1}^k\cos^2\angle(\xi_i,\xi)\le1$ for any $\xi\in S^k(\hat\Sigma)$.
Combining the inequalities above, we obtain the $(1+\varkappa(\delta))$-Lipschitzness of $f$.

Next, we show the $(1-\varkappa(\delta))$-openness of $f|_E$.
It suffices to check that $f|_E$ satisfies the assumption of Lemma \ref{lem:open} on $B(p,\ell\delta)\cap E$.
Let $x\in B(p,\ell\delta)\cap E$ and $\varphi:\Sigma_x\to S^k(\hat\Sigma)$ be as above.
By Lemma \ref{lem:susp}, we may further assume that the $\varkappa(\delta)$-neighborhood of $\varphi(\Sigma_xE)$ contains $\mathbb S^{k-1}$ in $S^k(\hat\Sigma)$.
Recall that $f'(\eta)=\left(-\cos\angle((A_1)'_x,\eta),\dots,-\cos\angle((A_k)'_x,\eta)\right)$ for $\eta\in\Sigma_x$ by the first variation formula.
Therefore, for any $\xi\in\mathbb S^{k-1}\subset\mathbb R^k$, we can find $\eta\in\Sigma_xE$ such that 
\[\left|f'(\eta)-\xi\right|<\varkappa(\delta).\]
Thus, $f|_E$ satisfies the assumption of Lemma \ref{lem:open} at $x$.
\end{proof}

\begin{rem}\label{rem:open}
The first part of the above proposition is an improvement of \cite[5.4]{BGP}.
The second part is generalized to regular admissible maps defined in \cite{Per2}: the restriction of a regular admissible map to an extremal subset is $c$-open for some $c>0$ (see \cite[\S2]{PP1}, \cite[\S9]{K}).
Note that given a strainer, one can slightly modify it so that the associated distance map is regular as an admissible map.
Furthermore, it was pointed out by the referee that once $c$-openness is proved, one can obtain $(1-\varkappa(\delta))$-openness by using the infinitesimal characterization of such maps in \cite[1.2]{L} together with the limit argument as in Lemma \ref{lem:susp}.
See \cite[\S8.3]{LyN} for an analogous argument.
\end{rem}

As a corollary, we see that the Hausdorff dimension of an extremal subset is equal to the strainer number (see \S\ref{sec:str} for the definition).
Note that since the intrinsic and extrinsic metrics of an extremal subset are locally bi-Lipschitz equivalent, the Hausdorff dimensions in both metrics coincide.

\begin{cor}\label{cor:str}
The Hausdorff dimension of an extremal subset is equal to its $\delta$-strainer number for any small $\delta>0$.
\end{cor}

\begin{proof}
Let $p$ be a $(k,\delta)$-strained point of an extremal subset $E$.
Then, by Proposition \ref{prop:open}, there exists a Lipschitz map from a neighborhood of $p$ in $E$ onto an open subset in $\mathbb R^k$.
Thus, $\dim_{\mathrm H}E\ge\delta\mathchar`-\str(E)$.
On the other hand, Theorem \ref{thm:str} implies $\dim_{\mathrm H}E\le\delta\mathchar`-\str(E)$.
\end{proof}

From now, by the dimension of an extremal subset we mean its Hausdorff dimension (see also Corollary \ref{cor:dim} below).
Since a strainer is liftable under the Gromov-Hausdorff convergence of Alexandrov spaces, the above corollary implies

\begin{cor}\label{cor:lim}
The limit of $m$-dimensional extremal subsets under the Gromov-Hausdorff convergence of Alexandrov spaces is an extremal subset of dimension $\le m$.
\end{cor}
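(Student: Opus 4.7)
The plan is to deduce Corollary \ref{cor:lim} from Corollary \ref{cor:dim} by transferring the strainer-number bound across the Gromov-Hausdorff limit. Since the extremality of the limit $E$ was already recorded in \S\ref{sec:ext}, only the dimension inequality $\dim_{\mathrm H}E\le m$ requires proof.

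First I would translate the hypothesis and conclusion into the language of strainers. By Corollary \ref{cor:dim}, $\dim_{\mathrm H}E_i=m$ is equivalent to $\delta\mathchar`-\str(E_i)\le m$ for every sufficiently small $\delta>0$, and it suffices to prove $\delta\mathchar`-\str(E)\le m$ for a single sufficiently small $\delta>0$ in order to conclude $\dim_{\mathrm H}E\le m$. I would then argue by contradiction: suppose some $p\in E$ were $(m+1,\delta)$-strained, witnessed by a strainer $\{(a_j,b_j)\}_{j=1}^{m+1}$ with $\delta$ well below the universal smallness threshold.

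The step to watch is the lifting of this strainer along the sequence. Using $E_i\to E$ and $M_i\to M$, I can choose $p_i\in E_i$ with $p_i\to p$ and points $a_j^i,b_j^i\in M_i$ converging to $a_j,b_j$. Because pairwise distances, and hence the comparison angles appearing in Definition \ref{dfn:str}, are continuous under Gromov-Hausdorff convergence, the strainer inequalities at $p$ pass to $p_i$ with a small loss: for $i$ large, $\{(a_j^i,b_j^i)\}_{j=1}^{m+1}$ is an $(m+1,2\delta)$-strainer at $p_i\in E_i$. This gives $2\delta\mathchar`-\str(E_i)\ge m+1$, contradicting $2\delta\mathchar`-\str(E_i)=m$ (which follows from Corollary \ref{cor:dim} applied to $E_i$ at parameter $2\delta$). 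The only bookkeeping required is that the universal smallness threshold for $\delta$ depends only on $n$ and $\kappa$, both uniform along the sequence, so a single $\delta$ can be chosen for which $2\delta$ remains below the threshold for every $M_i$ and for $M$ simultaneously; this is essentially the one-line remark preceding the corollary made rigorous.
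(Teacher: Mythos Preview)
Your proposal is correct and is precisely the argument the paper has in mind: the paper's proof is the single clause preceding the corollary (``Since a strainer is liftable under the Gromov-Hausdorff convergence of Alexandrov spaces, the above corollary implies the following''), and you have simply unpacked that clause---lift a putative $(m+1,\delta)$-strainer at a limit point back to $E_i$ and contradict Corollary~\ref{cor:dim}. The only cosmetic point is that Corollary~\ref{cor:dim} gives $\delta\mathchar`-\str(E_i)=m$ rather than merely $\le m$, but the inequality is all you use.
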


In particular, we obtain

\begin{cor}\label{cor:loc}
Let $E$ be an extremal subset of an Alexandrov space $M$ and $p\in E$.
Then, we have
\[\dim T_pE=\delta\mathchar`-\str_p(E),\quad\dim \Sigma_pE=\delta\mathchar`-\str_p(E)-1.\]
for any small $\delta>0$ (see \S\ref{sec:str} for the definition of the local strainer number).
\end{cor}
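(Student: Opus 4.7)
The second equality reduces to the first via the cone relation $T_pE=K(\Sigma_pE)$, which gives $\dim T_pE=\dim\Sigma_pE+1$. So I focus on $\dim T_pE=\delta\mathchar`-\str_p(E)$ and address the two inequalities separately.

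For the upper bound $\dim T_pE\le\delta\mathchar`-\str_p(E)$, I apply Corollary \ref{cor:dim} to the extremal subset $E\cap B(p,r)$ of the ball $B(p,r)$, giving $\dim(E\cap B(p,r))=\delta\mathchar`-\str(E\cap B(p,r))$. The right-hand side is non-increasing in $r$ and tends to $\delta\mathchar`-\str_p(E)$ as $r\to 0$. Since $T_pE\cap\bar B(o,1)$ is the pointed Gromov-Hausdorff limit of the rescaled balls $r^{-1}(E\cap\bar B(p,r))$, Corollary \ref{cor:lim} gives $\dim(T_pE\cap\bar B(o,1))\le\dim(E\cap\bar B(p,r))$ for all small $r$, and the cone structure of $T_pE$ yields $\dim T_pE=\dim(T_pE\cap\bar B(o,1))\le\delta\mathchar`-\str_p(E)$.

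For the lower bound $\dim T_pE\ge k:=\delta\mathchar`-\str_p(E)$, I choose $(k,\delta)$-strained points $q_i\in E$ with $q_i\to p$ and strainers of length $\ell_i$; by shortening (which preserves the strainer condition via monotonicity of comparison angles) I may assume $\ell_i\le|pq_i|$. Rescaling $M$ by $1/\ell_i$ based at $q_i$ and passing to a subsequential pointed limit $(C,o)$, the strainer becomes a $(k,2\delta)$-strainer of length $1$ at $o$, and the limit extremal subset $F\subset C$ contains $o$, so $\dim F\ge k$ by Corollary \ref{cor:dim} applied to $F$. Depending on whether $|pq_i|/\ell_i$ stays bounded or tends to infinity, $F$ is isometric up to a basepoint shift either to $T_pE$ itself or to the iterated tangent cone $T_{\tilde q}(T_pE)$ at some $\tilde q\in T_pE$; in either case Corollary \ref{cor:lim} yields $\dim F\le\dim T_pE$, so $\dim T_pE\ge k$.

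The main obstacle is the lower bound, specifically the case $\ell_i/|pq_i|\to 0$: a single rescaling at the geometric scale $|pq_i|$ collapses the strainer in $T_pE$ itself, so I must rescale at the finer scale $\ell_i$, recognize the limit as an iterated tangent cone of $T_pE$, and transfer the dimension bound back to $T_pE$ via Corollary \ref{cor:lim}.
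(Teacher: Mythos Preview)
Your lower-bound argument has a genuine gap in the unbounded case $|pq_i|/\ell_i\to\infty$. The assertion that the pointed limit of $(\ell_i^{-1}M,q_i,\ell_i^{-1}E)$ is an iterated tangent cone $(T_{\tilde q}(T_pM),o,T_{\tilde q}(T_pE))$ is not justified: pointed Gromov--Hausdorff convergence $(\lambda M,p)\to(T_pM,o)$ only controls balls of fixed radius, not balls of radius $\ell_i/|pq_i|\to0$ around points at distance $1$ from the basepoint. In general a subsequential limit of $(\ell_i^{-1}M,q_i)$ need bear no relation to tangent cones of $T_pM$. Nor can you sidestep this case by simply setting $\ell_i=|pq_i|$: strainers can be shortened but not lengthened, so if the original strainer at $q_i$ has length $L_i<|pq_i|$ you are forced into $\ell_i\le L_i<|pq_i|$, and nothing in the hypothesis $\delta\mathchar`-\str_p(E)=k$ rules out $L_i/|pq_i|\to0$.

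The paper's proof of the lower bound avoids rescaling altogether. It invokes the gradient exponential map \cite[6.2]{F1}, a Lipschitz map from $T_pE$ \emph{onto} a neighborhood $U$ of $p$ in $E$. By Proposition~\ref{prop:open} one has $\dim_{\mathrm H}U\ge\delta\mathchar`-\str_p(E)$, and since Lipschitz surjections do not increase Hausdorff dimension, $\dim T_pE\ge\delta\mathchar`-\str_p(E)$ follows immediately. Your upper-bound argument is essentially correct, though applying Corollary~\ref{cor:dim} directly to the extremal subset $T_pE\subset T_pM$ and then lifting a strainer back to $E$ near $p$ (as in the proof of Corollary~\ref{cor:lim}) is cleaner than working inside open balls, which are not complete Alexandrov spaces.
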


\begin{proof}
The inequality $\dim T_pE\le\delta\mathchar`-\str_p(E)$ follows in the same way as the previous corollary.
On the other hand, it is known that there exists a Lipschitz map (gradient exponential map) from $T_pE$ onto a neighborhood of $p$ in $E$ (\cite[6.2]{F1}).
Furthermore, by Proposition \ref{prop:open}, the Hausdorff dimension of any neighborhood of $p$ in $E$ is not less than the local $\delta$-strainer number of $E$ at $p$.
Thus, the opposite inequality follows.
\end{proof}

\begin{rem}\label{rem:pri}
If $E$ is primitive, then by Corollary \ref{cor:den} below, we have $\dim T_pE=\dim E$ and $\dim\Sigma_pE=\dim E-1$.
\end{rem}

Now, we define regular points of extremal subsets.
Since any extremal subset is uniquely represented as a union of primitive extremal subsets with nonempty relative interior, it is sufficient to define regular points of primitive extremal subsets (see \S\ref{sec:ext}).
The main reason why we use primitive extremal subsets is that the local strainer number of a general extremal subset at each point is not constant.

\begin{dfn}\label{dfn:reg}
Let $E$ be an $m$-dimensional primitive extremal subset of an Alexandrov space $M$.
A point $p\in E$ is said to be \textit{regular} if the tangent cone $T_pE$ equipped with the extrinsic metric is isometric to $\mathbb R^m$.
\end{dfn}

Then, the following holds:

\begin{prop}\label{prop:reg}
Let $E$ be an $m$-dimensional primitive extremal subset of an Alexandrov space $M$.
Then, $p\in E$ is regular if and only if it is $(m,\delta)$-strained for any $\delta>0$.
\end{prop}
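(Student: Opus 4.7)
The plan is to prove the two implications separately.

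(\emph{Regular $\Rightarrow$ $(m,\delta)$-strained for every $\delta$.}) Let $\iota\colon\mathbb R^m\to T_pE$ be an isometry for the extrinsic metric. Since both spaces are metric cones with a distinguished vertex, $\iota(0)=o$. For each nonzero $v\in\mathbb R^m$, let $\xi_v\in\Sigma_pE\subset\Sigma_p$ denote the direction of the ray from $o$ to $\iota(v)$. The cosine rule on the Euclidean cone $T_p$, applied to the pair $\iota(v),\iota(w)$, yields $\angle(\xi_v,\xi_w)=\angle_{\mathbb R^m}(v,w)$. Taking $v=\pm e_i$ gives an orthogonal $m$-frame $\{(\xi_{e_i},\xi_{-e_i})\}_{i=1}^m$ of $\Sigma_p$ lying inside $\Sigma_pE$. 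Since an orthogonal $m$-frame is an $(m,\delta)$-strainer of $\Sigma_p$ for every $\delta>0$, and since $p$ is $(m,\delta)$-strained in $M$ iff $\Sigma_p$ admits such a strainer, this direction follows.

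(\emph{Converse.}) Assume $p$ is $(m,\delta)$-strained for every $\delta>0$. First, Corollary \ref{cor:dim} gives $\delta\mathchar`-\str(E)=\dim_{\mathrm H}E=m$, so combined with $\delta\mathchar`-\str_p(E)\ge m$ we get $\delta\mathchar`-\str_p(E)=m$; hence Corollary \ref{cor:loc} gives $\dim T_pE=m$ and $\dim\Sigma_pE=m-1$. Next, passing to a subsequential limit of strainers at $p$ of quality $\delta_j\to 0$ inside the compact space $\Sigma_p$ produces an exact orthogonal $m$-frame of $\Sigma_p$, so $\Sigma_p\cong S^m(\hat\Sigma)$ for some $\hat\Sigma$ of curvature $\ge 1$, and correspondingly $T_p\cong\mathbb R^m\times K(\hat\Sigma)$. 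The gradient-curve argument from the proof of Lemma \ref{lem:susp}, applied directly to the extremal subset $T_pE\subset T_p$, then shows that for each $\xi\in\mathbb S^{m-1}\subset\mathbb R^m\times\{o\}$, the gradient curve of $\dist_{-\xi}$ starting at $o\in T_pE$ is the Euclidean ray $\{t\xi:t\ge 0\}$; by Theorem \ref{thm:grad} this ray remains in $T_pE$. Varying $\xi$ yields $\mathbb R^m\times\{o\}\subset T_pE$.

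The remaining step---upgrading this inclusion to equality and thereby obtaining the extrinsic isometry $T_pE\cong\mathbb R^m$---is the part expected to be the main obstacle. The plan is to argue that, since $\mathbb R^m\times\{o\}$ already realizes the full Hausdorff dimension $m=\dim T_pE$, any excess point of $\Sigma_pE\setminus\mathbb S^{m-1}$ would sit at positive join-angle from $\mathbb S^{m-1}$ in $S^m(\hat\Sigma)$ and generate a proper sub-extremal structure inside $T_pE$; primitivity of $E$, which passes to the blow-up $T_pE$ together with the uniqueness of the primitive decomposition, should then rule out such an excess. If formalizing this requires further machinery, an alternative route is to bootstrap from Theorem \ref{thm:main'}(1): once one knows that the distance map $f=(|a_1\cdot|,\dots,|a_m\cdot|)$ from an $(m,\delta)$-strainer at $p$ is an $\varepsilon$-almost isometry on a neighborhood of $p$ in $E$ for $\delta\ll\varepsilon$, blowing up with $\delta_j\to 0$ yields an exact isometry between $T_pE$ and an open subset of $\mathbb R^m$, which by the cone structure of $T_pE$ forces $T_pE\cong\mathbb R^m$.
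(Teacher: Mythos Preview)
Your argument tracks the paper's proof closely up to and including the inclusion $\mathbb R^m\times\{o\}\subset T_pE$, and your use of Corollaries \ref{cor:dim} and \ref{cor:loc} to pin down $\dim T_pE=m$ is exactly what is needed. The gap is in the final step, where you try to upgrade the inclusion to equality.

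Your primary plan---that primitivity of $E$ ``passes to the blow-up $T_pE$'' and then the primitive decomposition rules out excess---is not justified and is in general false: tangent cones of primitive extremal subsets need not be primitive, so no argument based on that assertion will close. Your fallback via Theorem \ref{thm:main'}(1)/Theorem \ref{thm:reg} is logically available (its proof does not use the present proposition), but it is far heavier than necessary and reverses the paper's order of presentation.

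The paper's actual step is a one-line continuation of the same gradient-curve idea you already used. Suppose some $(v,r\xi)\in T_pE$ with $v\in\mathbb R^m$, $\xi\in\hat\Sigma$, $r>0$. Running gradient curves of distance functions from points of $\mathbb R^m\times\{o\}$ (and from the vertex $o$) through $(v,r\xi)$, exactly as you did to obtain $\mathbb R^m\times\{o\}\subset T_pE$, one sweeps out all of $\mathbb R^m\times K(\{\xi\})$; by Theorem \ref{thm:grad} this entire $(m{+}1)$-dimensional half-space lies in $T_pE$, contradicting $\dim T_pE=m$. So the missing ingredient is not primitivity at all, but simply reapplying the gradient-curve argument to the hypothetical excess point and invoking the dimension bound you already have.
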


\begin{proof}
The ``only if'' part is clear.
Conversely, if $p\in E$ is $(m,\delta)$-strained for any $\delta>0$, then we can find an orthogonal $m$-frame in $\Sigma_p$.
Therefore, $T_p$ splits isometrically into $\mathbb R^m\times K(\hat\Sigma)$ for some $\hat\Sigma$ of curvature $\ge1$.
Then, we have $\mathbb R^m\times\{o\}\subset T_pE$ by extremality as in the proof of Lemma \ref{lem:susp}.
Furthermore, if there exists $(v,r\xi)\in T_pE\setminus\mathbb R^m\times\{o\}$, where $v\in\mathbb R^m$, $\xi\in\hat\Sigma$ and $r>0$, a similar argument using gradient curves shows that $\mathbb R^m\times K(\{\xi\})\subset T_pE$.
However, this contradicts $\dim T_pE=m$.
Thus, we conclude that $T_pE=\mathbb R^m\times\{o\}$.
\end{proof}

Recall that for an Alexandrov space $M$, we denote by $M(k,\delta)$ the set of $(k,\delta)$-strained points in $M$ and by $M(k)$ the intersection of $M(k,\delta)$ for all $\delta>0$ (see \S\ref{sec:str}).
For an extremal subset $E$ of $M$, set $E(k,\delta):=E\cap M(k,\delta)$ and $E(k):=E\cap M(k)$.
Then, the above proposition together with Theorem \ref{thm:str} implies the following:

\begin{cor}\label{cor:reg}
Let $E$ be an $m$-dimensional primitive extremal subset of an Alexandrov space $M$.
Then, $E(m)$ is equal to the set of regular points in $E$.
In particular, the Hausdorff dimension of the set of non-regular points in $E$ is no more than $m-1$.
\end{cor}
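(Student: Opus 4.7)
The corollary should be a straightforward bookkeeping exercise combining Proposition \ref{prop:reg} with Theorem \ref{thm:str}. My plan has two essentially separate steps.

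First, I would unwind the definitions to establish the identification of $E(m)$ with the set of regular points of $E$. By definition, $E(m) = E \cap M(m) = \bigcap_{\delta > 0}(E \cap M(m,\delta))$, so $p \in E(m)$ precisely when $p$ is $(m,\delta)$-strained in the ambient space $M$ for every $\delta > 0$. Proposition \ref{prop:reg} then identifies this latter condition with $p$ being a regular point of the primitive extremal subset $E$, giving the first claim.

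Second, for the Hausdorff dimension estimate I would use the inclusion $E \setminus E(m) = E \cap (M \setminus M(m)) \subset M \setminus M(m)$ together with the monotonicity of Hausdorff dimension under inclusion, reducing matters to Theorem \ref{thm:str} applied to the ambient space $M$ with $k = m$. That theorem gives $\dim_{\mathrm H}(M \setminus M(m)) \le m-1$, and hence the same bound passes to $E \setminus E(m)$.

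I do not anticipate any real obstacle: the notation $E(m)$ and Proposition \ref{prop:reg} were set up precisely so that this corollary drops out immediately. The only minor point worth flagging is that the Hausdorff dimension of subsets of $E$ is unambiguous here, because the intrinsic and extrinsic metrics on $E$ are locally bi-Lipschitz equivalent (as recorded in \S\ref{sec:ext}), so both choices of metric yield the same dimension.
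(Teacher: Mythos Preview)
Your proposal is correct and matches the paper's approach exactly: the paper does not even write out a proof, merely noting that the corollary follows from Proposition~\ref{prop:reg} together with Theorem~\ref{thm:str}, which is precisely the two-step argument you describe. Your remark about the bi-Lipschitz equivalence of the intrinsic and extrinsic metrics is a nice clarification but is not strictly needed, since the Hausdorff dimension bound is inherited directly from the ambient space $M$.
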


Next, we prove the main theorem of this paper:

\begin{thm}\label{thm:reg}
Let $M$ be an Alexandrov space, $E\subset M$ an extremal subset and $p\in E$.
Let $\{(a_i,b_i)\}_{i=1}^k$ be a $(k,\delta)$-strainer at $p$ such that $k$ is the local $\delta$-strainer number of $E$ at $p$.
Then, $f=(|a_1\cdot|,\dots,|a_k\cdot|)$ gives a $\varkappa(\delta)$-almost isometry from a neighborhood of $p$ in $E$ to an open subset in $\mathbb R^k$ with respect to both intrinsic and extrinsic metrics of $E$.
\end{thm}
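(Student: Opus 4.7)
The upper Lipschitz bound $|f(x)-f(y)|\le(1+\varkappa(\delta))|xy|\le(1+\varkappa(\delta))\,d_E(x,y)$ on a ball $B(p,r)\cap E$ is already part of Proposition \ref{prop:open}, so the content of the theorem is the matching lower bound
\[
|f(x)-f(y)|\ge(1-\varkappa(\delta))\,d_E(x,y)
\]
on such a ball. Since $|xy|\le d_E(x,y)$, this gives the extrinsic version as well, and combined with the upper bound it forces $d_E$ and the extrinsic distance to differ only by a factor $1+\varkappa(\delta)$, so $f$ is a $\varkappa(\delta)$-almost isometry in both metrics. Surjectivity onto an open subset of $\mathbb R^k$ is inherited from the $(1-\varkappa(\delta))$-openness of Proposition \ref{prop:open}.

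The main ingredient behind the lower bound is the structural fact that at every $x\in E$ in a sufficiently small neighborhood $U$ of $p$, the space of directions $\Sigma_xE$ lies in the $\varkappa(\delta)$-neighborhood of the ``strainer sphere'' $\mathbb S^{k-1}\subset S^k(\hat\Sigma)$ provided by Lemma \ref{lem:susp}. The hypothesis $\delta\mathchar`-\str_p(E)=k$ lets us shrink $U$ so that no point of $U\cap E$ admits a $(k+1,\delta)$-strainer; then Corollary \ref{cor:loc} gives $\dim\Sigma_xE\le k-1$, while Lemma \ref{lem:susp} supplies a Hausdorff approximation $\varphi\colon\Sigma_x\to S^k(\hat\Sigma)$ with $\mathbb S^{k-1}$ in the $\varkappa(\delta)$-neighborhood of $\varphi(\Sigma_xE)$. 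For the reverse containment the plan is to argue by contradiction and compactness: counterexamples $(x_n,\eta_n)$ with $\varphi_n(\eta_n)$ at definite distance from $\mathbb S^{k-1}$ would pass to a blow-up limit in which the splitting $\mathbb R^k\times K(\hat\Sigma)$ becomes exact, and then the gradient-curve argument from the proof of Proposition \ref{prop:reg} applied to the extra direction would enlarge $T_{x_n}E$ beyond $\mathbb R^k\times\{o\}$, violating the dimension bound $\dim T_{x_n}E\le k$.

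Granting the structural fact, the lower bound follows from the generalized Lieberman theorem. For $x,y\in U\cap E$, let $\gamma\colon[0,L]\to E$ be an intrinsic shortest path with $L=d_E(x,y)$. By Theorem \ref{thm:qg}, $\gamma$ is a quasigeodesic in $M$, so its right tangent $\eta=\gamma^+(0)\in\Sigma_xE$ exists, and the monotonicity of comparison angles together with the first variation formula give $\tilde\angle a_ix\!\smile\!y\le|A_i'_x\eta|$; the analogous inequality for $b_i$ together with the strainer relation $|A_i'_xB_i'_x|>\pi-\varkappa(\delta)$ upgrades this to a two-sided estimate
\[
\left|\frac{|a_iy|-|a_ix|}{L}+\cos|A_i'_x\eta|\right|<\varkappa(\delta).
\]
Since $\eta$ is $\varkappa(\delta)$-close to $\mathbb S^{k-1}$ by the structural fact and the spherical Pythagorean identity $\sum_i\cos^2|\xi_i\zeta|=1$ holds for $\zeta\in\mathbb S^{k-1}$, this yields $\sum_i\cos^2|A_i'_x\eta|\ge 1-\varkappa(\delta)$; squaring and summing the previous display gives the desired $|f(x)-f(y)|^2\ge(1-\varkappa(\delta))L^2$.

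The main obstacle is the structural fact itself. The inclusion $\mathbb S^{k-1}\subset\varphi(\Sigma_xE)$ up to $\varkappa(\delta)$ is the easy half, obtained from gradient curves emanating from the cone vertex of the splitting, as in the proof of Lemma \ref{lem:susp}; the reverse containment is subtler because at a non-regular $x\in E$ near $p$ the tangent cone $T_x$ need not split exactly as $\mathbb R^k\times K(\hat\Sigma)$, so the argument of Proposition \ref{prop:reg} cannot be applied verbatim and has to be routed through a blow-up/compactness limit where the splitting becomes exact. Once this is settled, the remaining quasigeodesic expansion parallels the Lipschitz calculation of Proposition \ref{prop:open}.
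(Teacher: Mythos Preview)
Your approach is the paper's: use Proposition \ref{prop:open} for the upper Lipschitz bound and the openness of $f(U)$, run the intrinsic shortest path as a quasigeodesic via Theorem \ref{thm:qg}, and combine the quasigeodesic comparison angles with the structural fact that $\Sigma_xE$ is $\varkappa(\delta)$-close to $\mathbb S^{k-1}$ under the approximation $\varphi$. The paper isolates this last point as a separate Lemma \ref{lem:suspm}, whose proof is exactly the contradiction-and-limit argument you sketch (though the dimension bound is applied to the limit extremal set, not to $T_{x_n}E$).

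One step does not go through as written. Your claim that ``the analogous inequality for $b_i$ together with the strainer relation $|A_i'_xB_i'_x|>\pi-\varkappa(\delta)$ upgrades this to a two-sided estimate'' fails if you use only data at $x$. From $\tilde\angle a_ix\!\smile\!y\le|A_i'_x\eta|$, $\tilde\angle b_ix\!\smile\!y\le|B_i'_x\eta|$, and the perimeter bound $|A_i'_x\eta|+|B_i'_x\eta|<\pi+\varkappa(\delta)$ you get no lower bound on $\tilde\angle a_ix\!\smile\!y$: nothing rules out both quasigeodesic comparison angles being small simultaneously. The cure, which is what the paper does, is to bring in the reversed direction $x^\circ_y\in\Sigma_yE$ as well. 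The four one-sided monotonicity inequalities at $x$ and at $y$, together with the near-degenerate triangle identities $\tilde\angle a_ix\!\smile\!y+\tilde\angle a_iy\!\smile\!x\approx\pi$ (and the $b_i$-analogue) and the perimeter bounds in $\Sigma_x$ and $\Sigma_y$, squeeze each one-sided inequality into an equality up to $\varkappa(\delta)$. Once you add this symmetric use of the endpoint $y$, the rest of your argument is correct and matches the paper's.
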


\begin{rem}\label{rem:reg1}
If $E$ is primitive, then by Corollary \ref{cor:den} below, the local strainer number of $E$ at any point is equal to $\dim E$.
\end{rem}

For the proof, we need the maximal case of Lemma \ref{lem:susp}:

\begin{lem}\label{lem:suspm}
Under the assumption of Lemma \ref{lem:susp}, suppose that $\dim F=k-1$.
Then, we have $d_{\mathrm H}(\varphi(F),\mathbb S^{k-1})<\varkappa(\delta)$ and $\diam\hat\Sigma\le\pi/2$ in the conclusion, where $d_\mathrm{H}$ denotes the Hausdorff distance in $S^k(\hat\Sigma)$.
\end{lem}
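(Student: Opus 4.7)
The plan is a contradiction-compactness argument parallel to the proof of Lemma~\ref{lem:susp}. Suppose the strengthening fails; then there exist sequences of $(n-1)$-dimensional Alexandrov spaces $\Sigma_j$ of curvature $\ge 1$, extremal subsets $F_j\subset\Sigma_j$ with $\dim F_j=k-1$, and $(k,\delta_j)$-strainers $\{(A_{ij},B_{ij})\}_{i=1}^k$ with $\delta_j\to 0$, for which no choice of $(\hat\Sigma_j,\varphi_j)$ provided by Lemma~\ref{lem:susp} satisfies both strengthened conclusions. Passing to a subsequence, $(\Sigma_j,F_j)\to(\Sigma,F)$ in the Gromov--Hausdorff topology with $F$ extremal in $\Sigma$, and the strainers converge to an orthogonal $k$-frame; hence $\Sigma\cong S^k(\hat\Sigma)$ for some $\hat\Sigma$ of curvature $\ge 1$, and by Corollary~\ref{cor:lim} we have $\dim F\le k-1$.

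Next I would pin down the limit as $F=\mathbb S^{k-1}$. Transporting to the Euclidean cone $K(\Sigma)=\mathbb R^k\times K(\hat\Sigma)$, the argument of Lemma~\ref{lem:susp} forces $\mathbb R^k\times\{o\}\subset K(F)$, i.e.\ $\mathbb S^{k-1}\subset F$. For the reverse inclusion, I would mimic the gradient-curve reasoning of the proof of Proposition~\ref{prop:reg}: if $K(F)$ contained a point $(v,r\xi)$ with $r>0$ and $\xi\in\hat\Sigma$, then the isometric $\mathbb R^k$-factor together with Theorem~\ref{thm:grad} would yield $\mathbb R^k\times K(\{\xi\})\subset K(F)$, whence $\dim F\ge k$, contradicting $\dim F\le k-1$. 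Thus $F=\mathbb S^{k-1}$.

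The diameter bound then follows from a direct check that extremality of $F=\mathbb S^{k-1}$ in $S^k(\hat\Sigma)$ is already strong enough. For any $\xi_0\in\hat\Sigma\subset S^k(\hat\Sigma)\setminus F$, the function $\dist_{\xi_0}|_F$ is constantly $\pi/2$, so every $p\in F$ is a local minimum and must therefore be a critical point of $\dist_{\xi_0}$; by the first variation formula this means $\Sigma_p\subset\bar B((\xi_0)'_p,\pi/2)$. Writing $\Sigma_pS^k(\hat\Sigma)=S^{k-1}(\hat\Sigma)$ and noting that the direction $(\xi_0)'_p$ sits in its $\hat\Sigma$-factor at $\xi_0$, this reduces to $\hat\Sigma\subset\bar B(\xi_0,\pi/2)$; since $\xi_0$ was arbitrary, $\diam\hat\Sigma\le\pi/2$. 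Consequently, for large $j$ one can take $\hat\Sigma_j:=\hat\Sigma$ and a Hausdorff approximation $\varphi_j$ subordinate to $(\Sigma_j,F_j)\to(\Sigma,\mathbb S^{k-1})$, so that both strengthened conclusions hold for $j$ large, contradicting our assumption.

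The main obstacle I anticipate is the gradient-curve step identifying $F=\mathbb S^{k-1}$: it is here that the dimension hypothesis is genuinely used, and the argument of Proposition~\ref{prop:reg}, originally phrased for the tangent cone $T_pE$, has to be transported to the cone $K(F)\subset K(\Sigma)$, with careful book-keeping of extremality inside the product $\mathbb R^k\times K(\hat\Sigma)$. The diameter computation and the compactness extraction should then be routine.
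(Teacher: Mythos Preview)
Your proposal is correct and follows essentially the same route as the paper: a contradiction--compactness reduction to the limit situation $K(F)\subset\mathbb R^k\times K(\hat\Sigma)$ with $\dim K(F)\le k$, then the gradient-curve argument of Proposition~\ref{prop:reg} to force $K(F)=\mathbb R^k\times\{o\}$, and finally the extremality of $F$ (equivalently $K(F)$) to deduce $\diam\hat\Sigma\le\pi/2$. Your diameter computation is carried out on $\Sigma$ rather than on $K(\Sigma)$ and with more detail, but this is only a cosmetic difference.
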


\begin{proof}
In view of Corollary \ref{cor:lim} and the proof of Lemma \ref{lem:susp}, it suffices to show that if $K(F)$ is an extremal subset of dimension $\le k$ in $K(\Sigma)=\mathbb R^k\times K(\hat\Sigma)$, then $K(F)=\mathbb R^k\times\{o\}$ and $\diam\hat\Sigma\le\pi/2$.
The former follows from the same argument as in the proof of Proposition \ref{prop:reg}.
The latter follows from the extremality of $K(F)$.
\end{proof}

\begin{proof}[Proof of Theorem \ref{thm:reg}]
Let $\ell>0$ be a lower bound of the length of the strainer $\{(a_i,b_i)\}_{i=1}^k$.
Let $U$ be a sufficiently small open neighborhood of $p$ in $E$ such that
\begin{enumerate}
\item the diameter of $U$ in the intrinsic metric of $E$ is less than $\ell\delta$;
\item there are no $(k+1,\delta)$-strained points in $U$.
\end{enumerate}
Note that $\{(a_i,b_i)\}_{i=1}^k$ is a $(k,\varkappa(\delta))$-strainer for every point in $U$.
Furthermore, by Proposition \ref{prop:open}, $f$ is $(1+\varkappa(\delta))$-Lipschitz on $U$ and $f(U)$ is an open subset of $\mathbb R^k$.
Let $x,y\in U$ and let $|\ ,\ |_E$ denote the intrinsic metric of $E$.
Since
\[\frac{|f(x)f(y)|}{|xy|_E}\le\frac{|f(x)f(y)|}{|xy|}<1+\varkappa(\delta),\]
it remains to verify that
\begin{equation}\label{eq:bilip}
\frac{|f(x)f(y)|}{|xy|_E}>1-\varkappa(\delta).
\end{equation}

According to the generalized Lieberman theorem, a shortest path between $x$ and $y$ in the intrinsic metric of $E$ is a quasigeodesic of $M$ (see \S\ref{sec:qg}).
First, by the condition (1) and the cosine formula on the $\kappa$-plane, we have
\[\left|\frac{|a_ix|-|a_iy|}{|xy|_E}-\cos\tilde\angle a_ix\!\smile\!y\right|<\varkappa(\delta)\]
where $\tilde\angle a_ix\!\smile\!y$ denotes the comparison angle of a quasigeodesic defined in \S\ref{sec:qg}.
Therefore,
\begin{equation}\label{eq:A}
\left|\frac{|f(x)f(y)|^2}{|xy|_E^2}-\sum_{i=1}^k\cos^2\tilde\angle a_ix\!\smile\!y\right|<\varkappa(\delta),
\end{equation}
Next, we show that
\begin{equation}\label{eq:B}
\left|\tilde\angle a_ix\!\smile\!y-\angle((a_i)'_x,y^\circ_x)\right|<\varkappa(\delta),
\end{equation}
where $y^\circ_x\in\Sigma_xE$ denotes the direction of a shortest path in the intrinsic metric of $E$.
The proof is the same as that of the inequality \eqref{eq:str} except that the monotonicity of comparison angles of quasigeodesics is used.
Firstly, the condition (1) and the Gauss-Bonnet formula on the $\kappa$-plane imply that
\begin{equation}\label{eq:a}
\begin{gathered}
\left|\tilde\angle a_ix\!\smile\!y+\tilde\angle a_iy\!\smile\!x-\pi\right|<\varkappa(\delta),\\
\left|\tilde\angle b_ix\!\smile\!y+\tilde\angle b_iy\!\smile\!x-\pi\right|<\varkappa(\delta).
\end{gathered}
\end{equation}
Secondly, the monotonicity of comparison angles of quasigeodesics yields
\begin{equation}\label{eq:b}
\begin{gathered}
\tilde\angle a_ix\!\smile\!y\le\angle((a_i)'_x,y^\circ_x),\quad\tilde\angle a_iy\!\smile\!x\le\angle((a_i)'_y,x^\circ_y),\\
\tilde\angle b_ix\!\smile\!y\le\angle((b_i)'_x,y^\circ_x),\quad\tilde\angle b_iy\!\smile\!x\le\angle((b_i)'_y,x^\circ_y).
\end{gathered}
\end{equation}
Finally, since $(a_i,b_i)$ is a $(1,\varkappa(\delta))$-strainer for $x$ and $y$, we have
\begin{equation}\label{eq:c}
\begin{gathered}
\angle((a_i)'_x,y^\circ_x)+\angle((b_i)'_x,y^\circ_x)\le2\pi-\angle((a_i)'_x,(b_i)'_x)<\pi+\varkappa(\delta),\\
\angle((a_i)'_y,x^\circ_y)+\angle((b_i)'_y,x^\circ_y)\le2\pi-\angle((a_i)'_y,(b_i)'_y)<\pi+\varkappa(\delta).
\end{gathered}
\end{equation}
Combining the inequalities \eqref{eq:a}, \eqref{eq:b} and \eqref{eq:c}, we obtain
\begin{align*}
&2\pi-2\varkappa(\delta)\\
&<\tilde\angle a_ix\!\smile\!y+\tilde\angle a_iy\!\smile\!x+\tilde\angle b_ix\!\smile\!y+\tilde\angle b_iy\!\smile\!x\\
&\le\angle((a_i)'_x,y^\circ_x)+\angle((a_i)'_y,x^\circ_y)+\angle((b_i)'_x,y^\circ_x)+\angle((b_i)'_y,x^\circ_y)\\
&<2\pi+2\varkappa(\delta).
\end{align*}
Therefore, the difference between the both sides of each inequality in \eqref{eq:b} is at most $4\varkappa(\delta)$.
In particular, we obtain the inequality \eqref{eq:B}.

Recall that $\{((a_i)'_x,(b_i)'_x)\}_{i=1}^k$ is a $(k,\varkappa(\delta))$-strainer of $\Sigma_x$.
In addition, the condition (2) together with Corollary \ref{cor:loc} implies that $\dim\Sigma_xE= k-1$.
Hence, it follows from Lemma \ref{lem:suspm} that there exists a $\varkappa(\delta)$-Hausdorff approximation $\varphi:\Sigma_x\to S^k(\hat\Sigma)$ such that
\[\varphi((a_i)'_x)=\xi_i,\quad\varphi((b_i)'_x)=\eta_i\quad\text{and}\quad d_{\mathrm H}(\varphi(\Sigma_xE),\mathbb S^{k-1})<\varkappa(\delta),\]
where $\{(\xi_i,\eta_i)\}_{i=1}^k$ is an orthogonal $k$-frame of $S^k(\hat\Sigma)$.
Therefore, we have
\begin{equation}\label{eq:C}
\left|\sum_{i=1}^k\cos^2\angle((a_i)'_x,y^\circ_x)-1\right|<\varkappa(\delta)
\end{equation}
since $\sum_{i=1}^k\cos^2\angle(\xi_i,\xi)=1$ for any $\xi\in\mathbb S^{k-1}$.
Combining the inequalities \eqref{eq:A}, \eqref{eq:B} and \eqref{eq:C}, we obtain the desired inequality \eqref{eq:bilip}.
\end{proof}

\begin{rem}\label{rem:reg2}
As with Proposition \ref{prop:open}, it was pointed out by the referee that Theorem \ref{thm:reg} also follows from the infinitesimal characterization of bi-Lipschitz maps in \cite[1.3]{L}.
Roughly speaking, the result of \cite[1.3]{L} states that to prove that $f|_E$ is $\varkappa(\delta)$-almost isometry, it suffices to show that so is its derivative at each point.
The latter follows from an argument by contradiction similar to the proof of Lemma \ref{lem:suspm}.
See \cite[\S8.3]{LyN} for an analogous argument.
Note that the generalized Lieberman theorem is still needed to obtain the $(1-\varkappa(\delta))$-openness with respect to the intrinsic metric of $E$.
Indeed, to show that the blow up of $E$ at $x$ with respect to the intrinsic metric coincides with $T_xE$ with the intrinsic metric, one needs the convergence theorem \cite[1.2]{Pet1} for the intrinsic metrics of extremal subsets, which uses the generalized Lieberman theorem.
\end{rem}

The rest of this section consists of corollaries of Theorem \ref{thm:reg}.
First, the intrinsic and extrinsic metrics are almost equal near a regular (strained) point in the following sense:

\begin{cor}\label{cor:met}
Under the assumption of Theorem \ref{thm:reg}, we have
\[\frac{|xy|_E}{|xy|}<1+\varkappa(\delta),\quad\angle(y'_x,y^\circ_x)<\varkappa(\delta)\]
for any $x,y\in E$ sufficiently close to $p$.
\end{cor}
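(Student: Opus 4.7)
The plan is to derive both inequalities from Theorem \ref{thm:reg} and the auxiliary estimates established in its proof, without essentially new computations. The first inequality is immediate: Theorem \ref{thm:reg} asserts that $f$ is a $\varkappa(\delta)$-almost isometry from a neighborhood of $p$ in $E$ onto an open subset of $\mathbb R^k$ with respect to \emph{both} the intrinsic metric $d_E$ and the extrinsic metric $d$, so for $x,y \in E$ near $p$ the quantity $|f(x)f(y)|$ lies within a factor $1\pm\varkappa(\delta)$ of both $|xy|$ and $|xy|_E$; combining the two comparisons (and using $d\le d_E$) gives $|xy|_E/|xy| < 1+\varkappa(\delta)$.

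For the directional estimate, I would first show that $y'_x$ and $y^\circ_x$ have almost identical coordinates in the orthogonal frame $\{(a_i)'_x\}_{i=1}^k$, and then upgrade this to an honest angular bound via the Hausdorff approximation $\varphi:\Sigma_x\to S^k(\hat\Sigma)$ of Lemma \ref{lem:suspm}. Three estimates enter the first step. The strainer inequality \eqref{eq:str} used in the proof of Proposition \ref{prop:open} gives $\tilde\angle a_ixy\approx\angle((a_i)'_x,y'_x)$; its quasigeodesic analogue \eqref{eq:B} from the proof of Theorem \ref{thm:reg} gives $\tilde\angle a_ix\!\smile\!y\approx\angle((a_i)'_x,y^\circ_x)$; and applying the $\kappa$-plane cosine formula to the two triangles with sides $(|a_ix|,|xy|,|a_iy|)$ and $(|a_ix|,|xy|_E,|a_iy|)$, whose third sides agree up to a factor $1+\varkappa(\delta)$ by the first part, gives $\tilde\angle a_ixy\approx\tilde\angle a_ix\!\smile\!y$. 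Chaining these yields
\[
|\angle((a_i)'_x,y'_x)-\angle((a_i)'_x,y^\circ_x)|<\varkappa(\delta)\qquad(i=1,\dots,k).
\]

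For the second step, recall that $\varphi$ sends $(a_i)'_x$ to the orthogonal frame $\{\xi_i\}$ of $S^k(\hat\Sigma)=\mathbb S^{k-1}\ast\hat\Sigma$, on which one has the join identity $\sum_i\cos^2|\xi_i\xi|=\cos^2|\xi\,\mathbb S^{k-1}|$. Both $y'_x$ and $y^\circ_x$ satisfy $\sum_i\cos^2\angle((a_i)'_x,\cdot)\approx 1$; this follows from the almost isometry of $f$ combined with the cosine-formula argument of Theorem \ref{thm:reg}, applied to the extrinsic and intrinsic metrics respectively. Hence $\varphi(y'_x)$ and $\varphi(y^\circ_x)$ both lie within $\varkappa(\delta)$ of $\mathbb S^{k-1}$. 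On $\mathbb S^{k-1}$, the cosines of the distances to $\xi_1,\dots,\xi_k$ recover the standard Euclidean coordinates, so the coordinate-wise estimate of the previous paragraph determines the projections of $\varphi(y'_x)$ and $\varphi(y^\circ_x)$ to $\mathbb S^{k-1}$ to within $\varkappa(\delta)$; transferring back through $\varphi$ yields $\angle(y'_x,y^\circ_x)<\varkappa(\delta)$.

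The only subtle point is that $y'_x$, being a direction of a shortest path in $M$ rather than in $E$, need not lie in $\Sigma_xE$, so the closeness of $\varphi(\Sigma_xE)$ to $\mathbb S^{k-1}$ supplied by Lemma \ref{lem:suspm} cannot be invoked directly. The sum-of-cosines identity circumvents this obstacle by extracting the proximity of $\varphi(y'_x)$ to $\mathbb S^{k-1}$ from the extrinsic almost isometry of $f$ alone.
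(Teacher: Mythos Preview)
Your proof is correct and follows essentially the same route as the paper: derive the metric ratio bound from the double almost-isometry of $f$, then chain \eqref{eq:str}, \eqref{eq:B}, and the comparison-angle estimate $|\tilde\angle a_ixy-\tilde\angle a_ix\!\smile\!y|<\varkappa(\delta)$ to match the $\xi_i$-coordinates of $y'_x$ and $y^\circ_x$, and finally pass through the Hausdorff approximation $\varphi$ to $S^k(\hat\Sigma)$. Your explicit treatment of the subtlety that $y'_x\notin\Sigma_xE$ (handled via the sum-of-cosines identity and the extrinsic almost isometry) is a nice elaboration of a point the paper leaves implicit---there it follows instead by noting that $\varphi(y^\circ_x)$ is already near $\mathbb S^{k-1}$ by Lemma~\ref{lem:suspm}, so the coordinate match forces $\varphi(y'_x)$ near $\mathbb S^{k-1}$ as well.
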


\begin{proof}
The first inequality is clear from Theorem \ref{thm:reg}.
In particular, we have $|\tilde\angle a_ixy-\tilde\angle a_ix\!\smile\!y|<\varkappa(\delta)$.
Together with the inequalities \eqref{eq:str} and \eqref{eq:B}, this implies $|\angle((a_i)'_x,y'_x)-\angle((a_i)'_x,y^\circ_x)|<\varkappa(\delta)$.
Therefore, the Hausdorff approximation $\varphi:\Sigma_x\to S^k(\hat\Sigma)$ sends $y'_x$ and $y^\circ_x$ to two $\varkappa(\delta)$-close points near $\mathbb S^{k-1}$.
\end{proof}

Together with Corollary \ref{cor:reg}, the above implies

\begin{cor}\label{cor:vol}
The $m$-dimensional Hausdorff measures on an $m$-dimensional extremal subset with respect to the intrinsic and extrinsic metrics coincide.
\end{cor}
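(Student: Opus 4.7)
The plan is to establish the two inequalities between $\mathcal H^m_d(E)$ and $\mathcal H^m_{d_E}(E)$ separately. One direction is immediate: since $d\le d_E$ on $E$, every cover of $E$ by sets of $d_E$-diameter $\le\varepsilon$ is also a cover by sets of $d$-diameter $\le\varepsilon$, so $\mathcal H^m_d(E)\le\mathcal H^m_{d_E}(E)$.

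For the reverse inequality I would first discard a null set. By Theorem \ref{thm:str} applied in the ambient space, $E\setminus E(m,\delta)\subset M\setminus M(m,\delta)$ has extrinsic Hausdorff dimension at most $m-1$, and the local bi-Lipschitz equivalence of $d$ and $d_E$ recalled in Section \ref{sec:ext} transfers this bound to the intrinsic metric. Consequently $\mathcal H^m_d(E\setminus E(m,\delta))=\mathcal H^m_{d_E}(E\setminus E(m,\delta))=0$, so both measures concentrate on $E(m,\delta)$.

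On $E(m,\delta)$ the two metrics are almost equal via Corollary \ref{cor:met}. To invoke it I need the local $\delta$-strainer number of $E$ at each $p\in E(m,\delta)$ to equal $m$: it is at least $m$ since $p$ itself is $(m,\delta)$-strained, and it is at most $m$ since the global $\delta$-strainer number of the $m$-dimensional set $E$ is $m$ by Corollary \ref{cor:dim}. Hence Corollary \ref{cor:met} yields a neighborhood $U_p$ of $p$ in $E$ on which $|xy|_E/|xy|<1+\varkappa(\delta)$, making the identity a $(1+\varkappa(\delta))$-bi-Lipschitz map between $(U_p,d)$ and $(U_p,d_E)$; in particular $\mathcal H^m_{d_E}(A)\le(1+\varkappa(\delta))^m\mathcal H^m_d(A)$ for every $A\subset U_p$.

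Covering the separable set $E(m,\delta)$ by countably many such $U_{p_i}$ and disjointifying via $V_i=U_{p_i}\setminus\bigcup_{j<i}U_{p_j}$ gives
\[\mathcal H^m_{d_E}(E)=\mathcal H^m_{d_E}(E(m,\delta))=\sum_i\mathcal H^m_{d_E}(V_i)\le(1+\varkappa(\delta))^m\mathcal H^m_d(E).\]
Letting $\delta\to0$ finishes the proof. The only real subtlety is the identification of the local strainer number at points of $E(m,\delta)$ with $m$; once that is in hand, Corollary \ref{cor:met} supplies all the geometric content and the remaining measure-theoretic assembly is routine.
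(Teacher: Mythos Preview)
Your proof is correct and follows essentially the same route as the paper: discard the complement of the strained part using Theorem \ref{thm:str}, then use Corollary \ref{cor:met} locally, disjointify a countable cover, sum, and let the error parameter go to zero. The only cosmetic difference is that the paper works on the full regular set $E(m)=\bigcap_{\delta>0}E(m,\delta)$ and chooses $\delta$ in terms of a given $\varepsilon$, whereas you work on $E(m,\delta)$ for each fixed $\delta$ and send $\delta\to0$ at the end; your explicit check that the local $\delta$-strainer number equals $m$ (needed to invoke Corollary \ref{cor:met}) is a point the paper leaves implicit.
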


\begin{proof}
Let $E$ be an $m$-dimensional extremal subset.
Let $d$ and $d_E$ denote the extrinsic and intrinsic metrics of $E$, respectively.
We denote by $\vol_m$ the $m$-dimensional Hausdorff measure.
Since $d$ and $d_E$ are bi-Lipschitz equivalent and thus absolutely continuous with respect to each other, by the Radon-Nikodym theorem, it suffices to show that
\[\lim_{r\to0}\frac{\vol_m(B(p,r),d_E)}{\vol_m(B(p,r),d)}=1\]
for almost all $p\in E$, where the ball $B(p,r)$ is with respect to the extrinsic metric.
By Corollary \ref{cor:reg}, we may assume that $p\in E(m)$.
Then, the above equality follows from Corollary \ref{cor:met}.
\end{proof}

Theorem \ref{thm:reg} implies the denseness of regular (strained) points:

\begin{cor}\label{cor:den}
The set of $(m,\delta)$-strained points in an $m$-dimensional primitive extremal subset is dense.
Furthermore, the set of regular points is also dense.
\end{cor}

\begin{proof}
Let $E$ be an $m$-dimensional primitive extremal subset and $\mathring E$ its main part (see \S\ref{sec:ext}).
Recall that $\mathring E$ is open and dense in $E$, and is a topological manifold.
Fix small $\delta>0$ and set $k_p:=\delta\mathchar`-\str_p(E)$ for $p\in E$.
Then, Theorem \ref{thm:reg} implies that there exists an open subset in $E$ homeomorphic to $\mathbb R^{k_p}$.
Therefore, by the properties of $\mathring E$ above, $k_p$ must be a constant independent of $p$ and thus it equals $m=\delta\mathchar`-\str(E)$.
Hence, $E(m,\delta)$ is dense in $E$ for any $\delta>0$ and $E(m)$ is also dense as the intersection of countably many open dense subsets.
\end{proof}

The above proof also shows

\begin{cor}\label{cor:dim}
The dimension of a primitive extremal subset is equal to the dimension of its main part as a topological manifold.
Moreover, it coincides with the maximal dimension of open subsets homeomorphic to Euclidean balls.
\end{cor}

Lastly, we describe a neighborhood of a regular (strained) point of an extremal subset in the ambient Alexandrov space.
Let $\rho>0$.
For a subset $A$ in a metric space $X$, we denote by $U_\rho(A)$ (resp.\ $\bar U_\rho(A)$) the open (resp.\ closed) $\rho$-neighborhood of $A$, and define $\partial U_\rho(A):=\bar U_\rho(A)\setminus U_\rho(A)$.
For $v\in\mathbb R^k$, we denote by $I^k_\rho(v)$ (resp.\ $\mathring I^k_\rho(v)$) the closed (resp.\ open) $\rho$-neighborhood of $v$ with respect to the maximum norm of $\mathbb R^k$.
Furthermore, for a topological space $\Sigma$, we denote by $K_\rho(\Sigma)$ the quotient of $\Sigma\times[0,\rho)$ by identifying all points in $\Sigma\times\{0\}$.
Note that there is a natural projection from $K_\rho(\Sigma)$ to $[0,\rho)$.

\begin{cor}\label{cor:lnhd}
Under the assumption of Theorem \ref{thm:reg}, suppose that $\delta$ is sufficiently small (depending on $p$).
Then, for sufficiently small $r\gg\rho>0$, there exists a homeomorphism
\[\Phi:f^{-1}(\mathring I^k_\rho(f(p))\cap U_\rho(E)\cap B(p,r)\to\mathring I^k_\rho(f(p))\times K_\rho(\Sigma)\]
respecting $(f,|E\cdot|)$, that is, $(f,|E\cdot|)\equiv\mathrm{pr}\circ\Phi$, where $\mathrm{pr}:\mathring I^k_\rho(f(p))\times K_\rho(\Sigma)\to\mathring I^k_\rho(f(p))\times[0,\rho)$ is the natural projection and $\Sigma=f^{-1}(f(p))\cap\partial U_\rho(E)\cap B(p,r)$.
\end{cor}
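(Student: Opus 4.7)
The plan is to realize $(f, |E\cdot|)$ as a trivial fiber bundle over $\mathring I^k_\rho(f(p)) \times (0, \rho)$ on the portion where $|E\cdot|>0$, and then to verify that its fibers collapse continuously to a single point of $E$ as $|E\cdot| \to 0$, producing the cone factor $K_\rho(\Sigma)$.

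\textbf{Step 1 (transversality of $|E\cdot|$ to the strainer).} Choose $\delta, r, \rho$ small enough that Theorem \ref{thm:reg} applies on $B(p, r) \cap E$ and that, by Lemma \ref{lem:suspm} applied at each $y \in B(p, r) \cap E$, the space of directions $\Sigma_y$ admits a $\varkappa(\delta)$-Hausdorff approximation to some $S^k(\hat\Sigma)$ that carries the strainer to an orthogonal $k$-frame and $\Sigma_y E$ into a $\varkappa(\delta)$-neighborhood of the equator $\mathbb S^{k-1}$. For $x \in W := f^{-1}(\mathring I^k_\rho(f(p))) \cap (U_\rho(E) \setminus E) \cap B(p, r)$, let $y \in E$ be a nearest point to $x$; by the first variation formula, $x'_y$ makes angle $\ge \pi/2$ with every direction in $\Sigma_y E$, so under the approximation it corresponds to a point within $\varkappa(\delta)$ of the orthogonal factor $\hat\Sigma$, and is therefore $\varkappa(\delta)$-orthogonal to each strainer direction at $y$. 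Because $|xy| = |Ex| < \rho$ is small and the strainer remains $(k, \varkappa(\delta))$-good at $x$, strainer stability transfers this to $\Sigma_x$: the direction $y'_x$ is $\varkappa(\delta)$-orthogonal to every $(a_i)'_x, (b_i)'_x$. Thus $(f, |E\cdot|)$ is a $(k+1, \varkappa(\delta))$-regular admissible map on $W$ in the sense of \cite[\S9]{K}.

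\textbf{Step 2 (fibration and cone completion).} Perelman's fibration theorem for admissible regular maps then gives that $(f, |E\cdot|) : W \to \mathring I^k_\rho(f(p)) \times (0, \rho)$ is a locally trivial fiber bundle. Since the base is contractible and, after shrinking $r$, the fiber over $(f(p), \rho)$ is $\Sigma$, the bundle is trivial, yielding a homeomorphism
\[\Psi : W \to \mathring I^k_\rho(f(p)) \times (0, \rho) \times \Sigma\]
over $(f, |E\cdot|)$. By Theorem \ref{thm:reg} the restriction $f|_{E \cap B(p, r)}$ is injective, so $f^{-1}(v) \cap E \cap B(p, r) = \{y_v\}$ is a single point for each $v \in \mathring I^k_\rho(f(p))$. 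The splitting $T_{y_v} M \approx \mathbb R^k \times K(\hat\Sigma)$ from Lemma \ref{lem:suspm} shows that the transverse fiber $f^{-1}(v) \cap \partial U_t(E) \cap B(p, r)$ has diameter $O(t)$ uniformly in $v$, so $\Psi$ extends continuously across $|E\cdot| = 0$ by sending each $y_v$ to the cone vertex over $v$. The extended map $\Phi$ is the desired homeomorphism onto $\mathring I^k_\rho(f(p)) \times K_\rho(\Sigma)$, and $(f, |E\cdot|) = \mathrm{pr} \circ \Phi$ holds by construction.

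The main obstacle is Step 1: one must transfer the clean tangential picture at points of $E$ uniformly to nearby off-$E$ points and recast it as admissible-map regularity, which typically requires a rescaling/contradiction argument in the spirit of Lemma \ref{lem:susp}. A secondary technical point is the continuity of the extension $\Phi$ across $E$, which depends on the quantitative collapse rate of the transverse fibers as $t \to 0$ and relies on the uniform closeness of $\Sigma_y E$ to $\mathbb S^{k-1}$ at every $y$ near $p$.
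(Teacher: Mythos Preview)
Your overall strategy matches the paper's: show $(f,|E\cdot|)$ is regular (noncritical) away from $E$, apply Perelman's fibration theorem on $\mathring I^k_\rho(f(p))\times(0,\rho)$, then extend across $E$ using that $f|_E$ is a homeomorphism. However, Step~1 has a genuine gap.

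What you establish is that $E'_x$ is $\varkappa(\delta)$-orthogonal to each strainer direction $(a_i)'_x$. This gives the mutual almost-orthogonality condition $\angle((A_i)'_x,(A_j)'_x),\angle((A_i)'_x,E'_x)>\pi/2-\varkappa(\delta)$, but regularity (in either Perelman's or Kapovitch's sense) also requires a direction $\zeta\in\Sigma_x$ with $\angle((A_i)'_x,\zeta),\angle(E'_x,\zeta)>\pi/2+\nu$ for some definite $\nu>0$. Nothing in your argument excludes $E'_x$ being dense enough in $\Sigma_x$ that no such $\zeta$ exists; equivalently, you never show $\dist_E$ itself is noncritical at $x$. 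The paper supplies this via Lemma~\ref{lem:grad}, which gives $|\nabla_x\dist_E|>\varepsilon$ on a definite neighborhood of $E$, with $\varepsilon$ depending on a volume lower bound near $p$. This is precisely why the hypothesis says $\delta$ must be small \emph{depending on $p$}: one first fixes $\varepsilon$ from Lemma~\ref{lem:grad}, then takes $\delta\ll\varepsilon$, and only then can one combine the direction $\eta$ with $\angle(E'_x,\eta)>\pi/2+\varepsilon$ and the suspension picture $\Sigma_x\approx S^k(\hat\Sigma)$ to produce the common regular direction $\zeta$. Without Lemma~\ref{lem:grad} your Step~1 does not close.

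Two smaller points. First, the paper also verifies properness of $(f,|E\cdot|)$ on the relevant domain before invoking the fibration theorem; you should do the same (it follows from the almost isometry of $f|_E$ and an elementary estimate, but it is not automatic). Second, your extension across $E$ via a diameter-collapse estimate is more than needed: since $f|_E$ is a bi-Lipschitz homeomorphism onto $\mathring I^k_\rho(f(p))$ by Theorem~\ref{thm:reg}, the fiber over each $v$ meets $E$ in a single point, and sending that point to the cone vertex already defines a continuous extension of $\Psi$. The paper does exactly this in one line.
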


We need the following lemma:

\begin{lem}[{\cite[3.1(2)]{PP1}}, {\cite[4.1.4]{Pet2}}]\label{lem:grad}
Let $M$ be an $n$-dimensional Alexandrov space with curvature $\ge\kappa$, $E\subset M$ an extremal subset and $p\in E$ such that $\vol_nB(p,D)\ge v>0$.
Then, there exists $\varepsilon>0$ depending only on $n$, $\kappa$, $D$ and $v$ such that $|\nabla\dist_E|>\varepsilon$ on $(U_\varepsilon (E)\setminus E)\cap B(p,D)$.
\end{lem}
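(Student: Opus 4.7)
The plan is to realize $(f,\dist_E)$ as a stratified submersion on $U_\rho(E)\cap B(p,r)$, apply Perelman's fibration theorem on the open stratum $\{\dist_E>0\}$, and extend the trivialization across $E$ as a cone using the gradient flow of $\dist_E$.

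First I would fix the scales. Take $D>0$ providing a definite volume lower bound on $B(p,D)$ and let $\varepsilon_0>0$ be the constant of Lemma~\ref{lem:grad}, so $|\nabla\dist_E|>\varepsilon_0$ on $(U_{\varepsilon_0}(E)\setminus E)\cap B(p,D)$; choose $\delta$ small (depending on $p$ through the volume constant) so that Theorem~\ref{thm:reg} makes $f|_{E\cap B(p,r)}$ a $\varkappa(\delta)$-almost isometry onto an open subset of $\mathbb R^k$, and pick $\rho\ll r\ll\min\{\varepsilon_0,D\}$. Then $v\mapsto q_v$, defined by $\{q_v\}=(f|_E)^{-1}(v)\cap B(p,r)$, is a well-defined $\varkappa(\delta)$-almost isometry from $\mathring I^k_\rho(f(p))$ into $E$.

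The key geometric step is to show that $\nabla\dist_E$ is $\varkappa(\delta)$-orthogonal to every $\nabla\dist_{a_i}$ throughout $(U_\rho(E)\setminus E)\cap B(p,r)$. At a point $x\in E$ this follows from Lemma~\ref{lem:susp}: the strainer directions $(a_i)'_x$ are $\varkappa(\delta)$-close to the frame $\{\xi_i\}$ spanning the factor $\mathbb S^{k-1}$ that is $\varkappa(\delta)$-dense in $\Sigma_xE$, while any direction realizing $\nabla_x\dist_E$ must satisfy $d_x\dist_E\le 0$ along every $\eta\in\Sigma_xE$ (by the gradient-curve characterization of extremality, Theorem~\ref{thm:grad}), forcing it $\varkappa(\delta)$-close to the complementary $\hat\Sigma$-factor in the spherical join decomposition $\Sigma_x\approx S^k(\hat\Sigma)$. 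For $x$ with $0<\dist_E(x)<\rho$ the minimizing directions to $E$ remain in this $\hat\Sigma$-factor up to $\varkappa(\delta)$-error by the stability of the strainer on $B(p,r)$, while Lemma~\ref{lem:grad} keeps $|\nabla_x\dist_E|$ close to $1$; combining these gives the required uniform orthogonality.

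With this transversality, $(f,\dist_E)$ is a regular (admissible) map onto $\mathring I^k_\rho(f(p))\times(0,\rho)$ in the Perelman sense, and the fibration theorem yields a locally trivial fibration; contractibility of the base upgrades it to a product bundle with fiber $\Sigma=f^{-1}(f(p))\cap\partial U_\rho(E)\cap B(p,r)$. I then extend the trivialization across the stratum $E$ using the gradient flow of $\dist_E$, whose flow lines emanate from each $q_v$ with initial direction in the $\hat\Sigma$-factor above $q_v$, land on $\partial U_\rho(E)$ after flow-time within $\varkappa(\delta)$ of $\rho$, and remain $\varkappa(\delta)$-close to fibers of $f$ by the orthogonality above. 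Reparametrizing each flow line by its $\dist_E$-value and identifying all starting points to the single tip $q_v$ over each $v$ produces the homeomorphism $\Phi$ satisfying $(f,\dist_E)=\mathrm{pr}\circ\Phi$ by construction.

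The main obstacle will be the uniform orthogonality off $E$: one must control that the nearest-point directions from $x\in U_\rho(E)\setminus E$ to $E$ have not drifted out of the $\hat\Sigma$-factor sitting above the closest point of $E$. This is precisely where Lemma~\ref{lem:grad} is needed, and its dependence on a volume lower bound at $p$ explains the statement's provision that $\delta$ must be taken small depending on $p$.
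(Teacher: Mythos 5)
Your proposal does not address the statement it is supposed to prove. Lemma \ref{lem:grad} asserts a quantitative lower bound $|\nabla\dist_E|>\varepsilon$ on the punctured neighborhood $(U_\varepsilon(E)\setminus E)\cap B(p,D)$, with $\varepsilon$ depending only on $n$, $\kappa$, $D$ and the volume bound $v$. What you have written is instead an argument for Corollary \ref{cor:lnhd} (equivalently, part (3) of Theorem \ref{thm:gnhd}): a construction of the local trivialization of $(f,|E\cdot|)$ over $\mathring I^k_\rho(f(p))\times K_\rho(\Sigma)$ via Perelman's fibration theorem and the gradient flow of $\dist_E$. Worse, your very first step is to ``let $\varepsilon_0>0$ be the constant of Lemma \ref{lem:grad}'', so read as a proof of Lemma \ref{lem:grad} the argument is circular: the gradient bound is an input to your construction, not a consequence of it. (In the paper this lemma carries no proof at all; it is quoted from \cite[3.1(2)]{PP1} and \cite[4.1.4]{Pet2}.)

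A proof along standard lines would go by contradiction and rescaling: if the statement fails, there are $n$-dimensional spaces $M_i$ with curvature $\ge\kappa$, extremal subsets $E_i$, points $p_i\in E_i$ with $\vol_nB(p_i,D)\ge v$, and points $x_i\in B(p_i,D)\setminus E_i$ with $r_i:=|x_iE_i|\to0$ and $|\nabla_{x_i}\dist_{E_i}|\to0$. Rescaling by $r_i^{-1}$ and passing to a subsequence, the volume hypothesis prevents collapse, so the pointed limit is an $n$-dimensional Alexandrov space of nonnegative curvature containing a limit extremal subset $E_\infty$ at distance $1$ from the limit point $x_\infty$, at which $\dist_{E_\infty}$ is critical by semicontinuity of the gradient under noncollapsed convergence; one then derives a contradiction from this criticality together with the extremality of $E_\infty$ at the foot point of $x_\infty$. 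Whatever route you take, the volume lower bound must enter the argument explicitly, since the constant $\varepsilon$ genuinely depends on $v$; in your write-up the volume bound never plays a role except through the lemma you were asked to prove.
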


\begin{proof}[Proof of Corollary \ref{cor:lnhd}]
Take $\varepsilon>0$ such that $|\nabla_x\dist_E|>\varepsilon$ and  $\vol_{n-1}\Sigma_x>\varepsilon$ for any $x\in B(p,\varepsilon)\setminus E$ (note that the choice of $\varepsilon$ depends on the volume of a small neighborhood of $p$).
Let $\delta\ll\varepsilon$ and let $0<r<\min\{\varepsilon,\ell\delta\}$ be so small that $B(p,r)$ is contained in $U$ in the proof of Theorem \ref{thm:reg}.

First, we observe that $(f,|E\cdot|)$ is noncritical on $B(p,r)\setminus E$ in the sense of \cite[3.1, 3.7]{Per1}.
Let $(A_i)'_x$ denote the set of all directions from $x$ to $a_i$.
We show that there exist positive numbers $\mu\ll\nu$ such that for any $x\in B(p,r)\setminus E$, we have $\vol_{n-1}\Sigma_x>\nu$ and
\[\angle((A_i)'_x,(A_j)'_x) ,\angle((A_i)'_x,E'_x)>\pi/2-\mu\]
for all $i\neq j$, and there exists a direction $\xi\in\Sigma_x$ such that
\[\angle((A_i)'_x,\xi),\angle(E'_x,\xi)>\pi/2+\nu\]
for all $i$ (although the original definition of noncritical maps in \cite[3.1]{Per1} uses comparison angles, it can be weakened to angles).

Indeed, $\angle((A_i)'_x,(A_j)'_x)>\pi/2-\varkappa(\delta)$ since $r<\ell\delta$.
Let $y\in E$ be a closest point to $x$.
Then, we have $\tilde\angle a_iyx\le\pi/2$ by the extremality of $E$.
Furthermore, $|\tilde\angle a_ixy+\tilde\angle a_iyx-\pi|<\varkappa(\delta)$ since $r<\ell\delta$.
Therefore, $\tilde\angle a_ixy>\pi/2-\varkappa(\delta)$.
This implies $\angle((A_i)'_x,E'_x)>\pi/2-\varkappa(\delta)$.
Similarly, we have $\angle((B_i)'_x,E'_x)>\pi/2-\varkappa(\delta)$, where $(B_i)'_x$ denotes the set of all directions from $x$ to $b_i$.
In other words, $E'_x$ is almost orthogonal to $(A_i)'_x$ and $(B_i)'_x$.
Since $|\nabla_x\dist_E|>\varepsilon$, there exists $\eta\in\Sigma_x$ such that $\angle(E'_x,\eta)>\pi/2+\varepsilon$.
Then, by using Lemma \ref{lem:susp} (or moving $\eta$ towards $(B_i)'_x$'s), we can find $\zeta\in\Sigma_x$ such that $\angle((A_i)'_x,\zeta),\angle(E'_x,\zeta)>\pi/2+c(\varepsilon)$, where $c(\varepsilon)$ is a constant depending only on $n$ and $\varepsilon$ (note $\varkappa(\delta)\ll c(\varepsilon)$).
Hence, $(f,|E\cdot|)$ is noncritical on $B(p,r)\setminus E$.

Next, we observe that $(f,|E\cdot|)$ is proper on $f^{-1}(\mathring I^k_\rho(f(p)))\cap(U_\rho(E)\setminus E)\cap B(p,r)$ for sufficiently small $\rho\ll r$.
It is enough to show that $f^{-1}(I^k_\rho(f(p)))\cap\bar U_\rho(E)\cap B(p,r)$ is compact.
Let $x\in f^{-1}(I^k_\rho(f(p)))\cap\bar U_\rho(E)\cap B(p,r)$.
Let $y\in E$ be a closest point to $x$ and let $z\in E$ be the point such that $f(z)=f(x)$ (it exists by Theorem \ref{thm:reg}).
Then, we have $|xy|\le\rho$ and $|pz|<(1+\varkappa(\delta))|f(p)f(x)|<2\sqrt k\rho$.
Furthermore, we have $|yz|<(1+\varkappa(\delta))|f(y)f(x)|<(1+\varkappa(\delta))^2|yx|<2\rho$.
Therefore, we obtain
\[|xp|\le|xy|+|yz|+|zp|\le(3+2\sqrt k)\rho.\]
Thus, $f^{-1}(I^k_\rho(f(p)))\cap\bar U_\rho(E)\cap B(p,r)$ is compact provided that $(3+2\sqrt k)\rho<r$.

Therefore, the fibration theorem \cite[1.4.1]{Per1} implies that there exists a homeomorphism $\Psi:f^{-1}(\mathring I^k_\rho(f(p)))\cap(U_\rho(E)\setminus E)\cap B(p,r)\to\mathring I^k_\rho(f(p))\times(0,\rho)\times\Sigma$ respecting $(f,|E\cdot|)$.
Extending this homeomorphism to $f^{-1}(\mathring I^k_\rho(f(p)))\cap E\cap B(p,r)$ by the almost isometry $f$, we obtain the desired homeomorphism.
\end{proof}

\section{Applications}\label{sec:appl}

In this section, we prove Theorem \ref{thm:vol} and Theorem \ref{thm:nhd}.

First, we prove Theorem \ref{thm:vol}.
Note that by Corollary \ref{cor:vol}, the Hausdorff measure on an extremal subset is independent of whether the metric is intrinsic or extrinsic.
The following proof is analogous to that of the measured convergence for spaces with an upper curvature bound in \cite[\S12]{LyN}.
The author is grateful to the referee for suggesting this proof.

\begin{proof}[Proof of Theorem \ref{thm:vol}]
We focus on where the noncollapsing assumption is used.
Let $M$ be an $n$-dimensional Alexandrov space with curvature $\ge\kappa$ and $E$ an extremal subset.
In the following argument, we use the fact that if $\vol_n(B(p,D))\ge v>0$ for $p\in E$, then there exists a constant $C=C(n,\kappa,D,v)$ such that the intrinsic and extrinsic metrics of $E$ are $C$-Lipschitz equivalent on $B(p,D)\cap E$ (see \cite[\S4.1 property 4]{Pet2} or \cite[3.2(2)]{PP1}).
We also use the fact that $\vol_m(B(p,D)\cap E)$ is uniformly bounded above by $C(n,\kappa,D)$ (see \cite[1.4]{LiN} or \cite[1.1(3)]{F1}).

Let $(M_i,E_i)\overset{\mathrm{GH}}\longrightarrow (M,E)$ be a noncollapsing sequence of Alexandrov spaces and $m$-dimensional extremal subsets as in Theorem \ref{thm:vol}.
For simplicity, we assume $E$ is compact (the noncompact case is similar).
Since $\vol_m E_i$ is uniformly bounded above, by passing to a subsequence, we may assume that $(E_i,\vol_m)$ converges to $(E,\mu)$ in the measured Gromov-Hausdorff topology, where $\mu$ is a Radon measure on $E$.
Let us prove $\mu=\vol_m$.
By rescaling, $\vol_m(B(p_i,r)\cap E_i)\le Cr^m$ for any $p_i\in E_i$ and $0<r\le 1$.
This implies that $\mu$ is absolutely continuous with respect to $\vol_m$.
Thus, by the Radon-Nikodym theorem, it suffices to show that
\[\lim_{r\to0}\frac{\mu(B(p,r))}{\vol_m(B(p,r))}=1\]
for almost all $p\in E$.
By Corollary \ref{cor:reg}, we may assume $p\in E(m)$.
Then, there exists a $(m,\delta)$-strainer at $p$ for any small $\delta>0$.
Take a $(m,\delta)$-strainer at $p$ with length $>\ell$.
Recall that by the noncollapsing assumption, the intrinsic and extrinsic metrics of $E_i$ and $E$ are uniformly $C$-Lipschitz equivalent.
Let $0<r<C^{-1}\ell\delta$.
Then, $B(p,r)\cap E$ satisfies the condition (1) in the proof of Theorem \ref{thm:reg}.
Thus, by Theorem \ref{thm:reg}, we have
\[\left|\frac{\vol_m(B(p,r))}{v_m(r)}-1\right|<\varkappa(\delta),\]
where $v_m(r)$ is the volume of the $m$-dimensional Euclidean ball of radius $r>0$.
Let $p_i\in E_i$ be a sequence converging to $p$ and lift the $(m,\delta)$-strainer to $M_i$.
Then, by the choice of $r$ above, $B(p_i,r)\cap E_i$ still satisfies the condition (1) in the proof of Theorem \ref{thm:reg}.
Thus, by Theorem \ref{thm:reg} again, we have
\[\left|\frac{\vol_m(B(p_i,r))}{v_m(r)}-1\right|<\varkappa(\delta).\]
Passing to the limit and taking $\delta\to0$ (so $r\to0$) yield the desired equality.
\end{proof}

\begin{rem}\label{rem:vol}
Theorem \ref{thm:vol} also can be proved in the same way as the volume convergence of Alexandrov spaces (\cite[10.8]{BGP}, \cite[3.5]{S}, \cite[0.6]{Y1}).
Here is an outline.
Let $E(m,\delta,\ell)$ denote the set of points in $E$ having a $(m,\delta)$-strainer with length $>\ell$.
Let $p_i\in E_i$ be a sequence converging to $p\in E(m,\delta,\ell)$.
As in the above proof, by the noncollapsing assumption and Theorem \ref{thm:reg}, there exists a local almost isometry between neighborhoods of $p$ and $p_i$.
Gluing such local almost isometries, one can construct a global almost isometry from $E(m,\delta,\ell)$ into $E_i$ whose image contains $E_i(m,\delta/2,2\ell)$ (cf.\ \cite[9.8]{BGP} (see also \cite{WSS}), \cite[3.1]{S}, \cite[0.4]{Y1}).
Furthermore, the volumes of $E\setminus E(m,\delta,\ell)$ and $E_i\setminus E_i(m,\delta/2,2\ell)$ uniformly go to $0$ as $\ell\to 0$ (cf.\ \cite[1.3]{LiN}, \cite[6.6]{F1}, \cite[10.9]{BGP}).
Combining these two results yields the volume convergence of extremal subsets.
\end{rem}

Next, we prove Theorem \ref{thm:nhd}, which is a global version of Corollary \ref{cor:lnhd}.
Here is a more detailed formulation.
In view of Corollary \ref{cor:met}, we use the extrinsic metric below.

\begin{thm}[cf.\ {\cite[0.2]{Y1}} (see also \cite{F2})]\label{thm:gnhd}
Let $E$ be an $m$-dimensional compact extremal subset of an Alexandrov space $M$.
Suppose that every point in $E$ is $(m,\delta)$-strained, where $\delta$ depends on $M$.
Then, for sufficiently small $\rho>0$, there exists a locally trivial fibration $f:U_\rho(E)\to E$ such that
\begin{enumerate}
\item $f$ is $(1+\varkappa(\delta))$-Lipschitz and $(1-\varkappa(\delta))$-open;
\item $f$ is the identity on $E$;
\item for any $p\in E$, there exist a neighborhood $V\subset E$ and a homeomorphism $f^{-1}(V)\to V\times K_\rho(\Sigma)$ respecting $(f,|E\cdot|)$, where $\Sigma=f^{-1}(p)\cap \partial U_\rho(E)$ (see Corollary \ref{cor:lnhd} for the notation and terminology).
\end{enumerate}
\end{thm}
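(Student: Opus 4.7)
The plan is to glue the local fibrations supplied by Corollary \ref{cor:lnhd} via a partition-of-unity construction in almost-Euclidean charts, and then apply the fibration theorem of \cite[1.4.1]{Per1} to the resulting pair $(f, |E\cdot|)$.

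By compactness of $E$ and the $(m,\delta)$-strained hypothesis, I would first cover $E$ by finitely many open sets $\{V_\alpha\}_{\alpha=1}^N$ so that on each $V_\alpha$, Theorem \ref{thm:reg} supplies an $(m,\delta)$-strainer $\{(a_{i\alpha}, b_{i\alpha})\}_{i=1}^m$ of uniform length $> \ell$ whose associated distance map $\varphi_\alpha := (|a_{1\alpha}\cdot|, \dots, |a_{m\alpha}\cdot|)$ is a $\varkappa(\delta)$-almost isometry from an enlargement of $V_\alpha$ onto an open subset of $\mathbb R^m$. For uniformly small $\rho$, I would define local retractions $f_\alpha : U_\rho(V_\alpha) \to V_\alpha$ by $f_\alpha(x) := (\varphi_\alpha|_E)^{-1}(\varphi_\alpha(x))$. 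Each $f_\alpha$ is $(1+\varkappa(\delta))$-Lipschitz, fixes $E \cap V_\alpha$ pointwise, and satisfies $|x f_\alpha(x)| < \varkappa(\delta)\rho$; consequently, on pairwise overlaps, $|f_\alpha(x) f_\beta(x)| < \varkappa(\delta)\rho$ by the almost-isometry of each $\varphi_\gamma$ on $E$.

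To glue, I would take a partition of unity $\{\chi_\alpha\}$ on $E$ subordinate to $\{V_\alpha\}$, extend it to $U_\rho(E)$ by pulling back along a fixed $f_{\alpha_0}$, and define
\[
f(x) := (\varphi_{\alpha_0(x)}|_E)^{-1}\Bigl(\sum_{\beta} \chi_\beta(x)\, \varphi_{\alpha_0(x)}(f_\beta(x))\Bigr),
\]
where $\alpha_0(x)$ is any index with $x \in U_\rho(V_{\alpha_0(x)})$. Since the $\varphi_{\alpha_0(x)}$-images of the $f_\beta(x)$ lie within $\varkappa(\delta)\rho$ of one another in $\mathbb R^m$ and every transition $\varphi_\beta \circ \varphi_\alpha^{-1}$ is a near-isometry, the average is well-defined and independent of the choice of $\alpha_0(x)$ up to negligible $\varkappa(\delta)$-errors; properties (1) and (2) then follow from averaging in almost-Euclidean charts together with $f_\alpha|_E = \mathrm{id}$.

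For property (3), I would verify that $(f, |E\cdot|)$ is noncritical on $U_\rho(E) \setminus E$ in the sense of \cite[3.1, 3.7]{Per1} by repeating verbatim the angle computation in the proof of Corollary \ref{cor:lnhd}: since $f$ is $\varkappa(\delta)$-close to each $\varphi_\alpha$ modulo a near-Euclidean identification, the bounds $\angle((A_{i\alpha})'_x, E'_x) > \pi/2 - \varkappa(\delta)$ and the existence of an admissible direction $\zeta \in \Sigma_x$ separated from $E'_x$ and from the $f$-level directions by more than $\pi/2 + c(\varepsilon)$ carry over. Properness on small preimages is the same Lipschitz bookkeeping as in Corollary \ref{cor:lnhd}, so the fibration theorem yields the local trivialization, and the conical fiber structure is inherited from Corollary \ref{cor:lnhd}. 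The main obstacle is precisely this compatibility check: ensuring that the global projection $f$ is genuinely compatible with the conical trivializations attached to the individual strainer maps $\varphi_\alpha$, so that $(f, |E\cdot|)$ — not merely $(\varphi_\alpha, |E\cdot|)$ — is noncritical and its fibers are conical over $E$. Everything boils down to careful bookkeeping of $\varkappa(\delta)$-errors, guaranteed by Theorem \ref{thm:reg} and the near-isometry of the transitions $\varphi_\beta \circ \varphi_\alpha^{-1}$.
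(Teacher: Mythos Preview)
Your overall plan---build local retractions $f_\alpha=(\varphi_\alpha|_E)^{-1}\circ\varphi_\alpha$ from strainer maps and glue them---is exactly the paper's strategy; the paper glues inductively rather than by simultaneous averaging, but that difference is cosmetic. There are, however, two genuine gaps.

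First, a minor error: the assertion $|xf_\alpha(x)|<\varkappa(\delta)\rho$ is false, since $f_\alpha(x)\in E$ forces $|xf_\alpha(x)|\ge|xE|$, which can be nearly $\rho$. What is true---and what the paper proves as Claim~\ref{clm:1}---is $|f_\alpha(x),g(x)|<\varkappa(\delta)|xE|$ for $g(x)$ a nearest point of $E$; from this your conclusion $|f_\alpha(x)f_\beta(x)|<\varkappa(\delta)\rho$ is salvageable.

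The substantive gap is that you treat both the $(1-\varkappa(\delta))$-openness of $f$ and the noncriticality of $(f,|E\cdot|)$ as consequences of the $C^0$ closeness $|f_\alpha(x)f_\beta(x)|<\varkappa(\delta)\rho$ together with ``near-isometry of transitions''. These are first-order conditions on directions in $\Sigma_x$ and do not follow from $C^0$ data. The paper isolates the needed estimate as Claim~\ref{clm:2}: $\bigl|(\varphi_j(x)-\varphi_j(y))-(\psi_j\circ f(x)-\psi_j\circ f(y))\bigr|<\varkappa(\delta)|xy|$ for all nearby $x,y$. Proving this is the technical heart of the argument: the derivative of the cutoff (of order $1/r$) must be paired against the $C^0$ error (of order $\varkappa(\delta)r$), and transferring the estimate from chart $\varphi_k$ to chart $\varphi_j$ requires a nontrivial angle comparison via Lemma~\ref{lem:suspm} at two different basepoints $x$ and $f(x)$ (this is where the paper's choice of shortened strainers $a_i^j$ at distance $\ell\delta$ is used). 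You have not supplied either step. Relatedly, Perelman's fibration theorem \cite[1.4.1]{Per1} is formulated for admissible maps built from distance functions; the glued $f$ is not of that form, and the paper must invoke a modified fibration theorem \cite[5.2]{F2} whose hypothesis is precisely the first-order estimate of Claim~\ref{clm:2}. Without that estimate, the appeal to \cite{Per1} is unjustified.
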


\begin{rem}\label{rem:subm}
Indeed, $f$ is locally a $\varkappa(\delta)$-almost Lipschitz submersion in the sense of \cite{Y1}.
See Claim \ref{clm:2} below and \cite[6.6]{F2}.
\end{rem}

The construction of $f$ in the following proof is analogous to \cite[3.1]{S} (cf.\ \cite[4.1]{F2}).

\begin{proof}[Proof of Theorem \ref{thm:gnhd}]
By the compactness of $E$, there exists $\ell>0$ such that every point in $E$ has an $(m,\delta)$-strainer with length $>\ell$.
Let $0<r<\ell\delta^2$ and take a maximal $r/2$-discrete set $\{p_j\}_{j=1}^N$ in $E$.
Let $\{(\alpha_i^j,\beta_i^j)\}_{i=1}^m$ be an $(m,\delta)$-strainer at $p_j$ with length $>\ell$.
Choose points $a_i^j$, $b_i^j$ on shortest paths $p_j\alpha_i^j$, $p_j\beta_i^j$ at distance $\ell\delta$ from $p_j$, respectively.
Then, $\{(a_i^j,b_i^j)\}_{i=1}^m$ is also an $(m,\delta)$-strainer at $p_j$.
Set
\begin{align*}
&\varphi_j:=(|a_1^j\cdot|,\dots,|a_m^j\cdot|),&&U_j:=B(p_j,r),&&\lambda U_j:=B(p_j,\lambda r)\\
&\psi_j:=\varphi_j|_E,&&V_j:=U_j\cap E,&&\lambda V_j:=\lambda U_j\cap E
\end{align*}
for $\lambda>0$.
Then, $\varphi_j$ is $(1+\varkappa(\delta))$-Lipschitz on $3U_j$ and $\psi_j$ is a $\varkappa(\delta)$-almost isometry from $10V_j$ to an open subset in $\mathbb R^m$.
Therefore, we can define $\psi_j^{-1}\circ\varphi_j$ on $3U_j$.
We take an average of them to obtain a global map.
Define $f_k:\bigcup_{l=1}^kU_l\to E$ for $1\le k\le N$ inductively as follows:
\[f_1:=\psi_1^{-1}\circ\varphi_1:U_1\to E,\]
and for $k\ge2$,
\begin{equation*}
f_k:=
\begin{cases}
\hfil f_{k-1} & \text{on}\hfil\bigcup_{l=1}^{k-1}U_l\setminus 2U_k,\\
\psi_k^{-1}((1-\chi_k)\psi_k\circ f_{k-1}+\chi_k\varphi_k) & \text{on}\quad\bigcup_{l=1}^{k-1}U_l\cap(2U_k\setminus U_k),\\
\hfil \psi_k^{-1}\circ\varphi_k & \text{on}\hfil U_k.
\end{cases}
\end{equation*}
Here, $\chi_k:=\chi(|p_k\cdot|/r)$, where $\chi:[0,\infty)\to[0,1]$ is a smooth function such that $\chi\equiv1$ on $[0,1]$ and $\chi\equiv0$ on $[2,\infty)$.
Note that $\chi_k$ is $C/r$-Lipschitz for some constant $C$.

Now, we show the following two claims for $f_k$ by induction on $k$.
Note that since the multiplicity of the covering $\{2U_j\}_{j=1}^N$ is bounded above by some constant depending only on $m$, so is the number of induction steps at each point of the domain of $f_k$.

First, we prove
\begin{clm}\label{clm:1}
For every $1\le j\le N$ and $1\le k\le N$, we have
\[\bigl|\psi_j^{-1}\circ\varphi_j,f_k\bigr|<\varkappa(\delta)r\]
on $3U_j\cap\bigcup_{l=1}^kU_l$ (and therefore, $f_{k+1}$ can be defined as above).
\end{clm}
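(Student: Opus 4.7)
I would prove Claim \ref{clm:1} by induction on $k$, after first establishing a consistency lemma that compares the local retractions coming from different charts.

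The consistency lemma states that for any $x\in 3U_j\cap 3U_l$, the two retracted points $z_j:=\psi_j^{-1}\varphi_j(x)$ and $z_l:=\psi_l^{-1}\varphi_l(x)$ satisfy $|z_jz_l|<\varkappa(\delta)r$. To prove it, let $x_0\in E$ be a nearest point to $x$. Extremality of $E$ at $x_0$ implies that $(x)'_{x_0}$ is critical for $\dist_x$, so $\angle((a_i^l)'_{x_0},(x)'_{x_0})\le\pi/2$; coupled with the opposite-pair condition $\angle((a_i^l)'_{x_0},(b_i^l)'_{x_0})>\pi-\varkappa(\delta)$ and the analogous bound for $b_i^l$, this forces $\cos\angle((a_i^l)'_{x_0},(x)'_{x_0})=O(\varkappa(\delta))$. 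Combined with the inequality \eqref{eq:str} relating comparison and true angles, and the cosine rule, one obtains $|\varphi_l(x)-\varphi_l(x_0)|=O(\varkappa(\delta)|xx_0|)$. Since $\psi_l(x_0)=\varphi_l(x_0)$, $\psi_l(z_l)=\varphi_l(x)$, and $\psi_l$ is a $\varkappa(\delta)$-almost isometry on $10V_l$, we get $|z_lx_0|=O(\varkappa(\delta)|xx_0|)\le O(\varkappa(\delta)r)$; the same estimate holds for $z_j$, and the triangle inequality finishes the lemma.

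Given the consistency lemma, the induction on $k$ is straightforward. The base case $k=1$ is immediate since $f_1=\psi_1^{-1}\varphi_1$. For the inductive step, split a point $x\in 3U_j\cap\bigcup_{l=1}^kU_l$ into three cases based on the piecewise definition of $f_k$: (i) if $x\notin 2U_k$, then $f_k(x)=f_{k-1}(x)$ and the induction hypothesis applies directly; (ii) if $x\in U_k$, then $f_k(x)=z_k$ and the claim is the consistency lemma between $j$ and $k$; (iii) if $x\in 2U_k\setminus U_k$, apply the induction hypothesis with $j$ replaced by $k$ to get $|z_k-f_{k-1}(x)|<\varkappa(\delta)r$, push forward via the $(1+\varkappa(\delta))$-Lipschitz $\psi_k$ to obtain $|\psi_k(f_{k-1}(x))-\varphi_k(x)|<\varkappa(\delta)r$, observe that the convex combination of these two $\mathbb R^m$-points lies within $\varkappa(\delta)r$ of $\varphi_k(x)$, and pull back by the $(1+\varkappa(\delta))$-Lipschitz $\psi_k^{-1}$ to conclude $|f_k(x)-z_k|<\varkappa(\delta)r$; combining with the consistency lemma completes the case.

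The principal obstacle is the consistency lemma: a naive Lipschitz bound on $\varphi_l$ only gives $|z_jz_l|=O(|xx_0|)=O(r)$, which is too weak. The $\varkappa(\delta)$-improvement requires exploiting that the direction from a point in $M$ to its nearest point in $E$ is almost perpendicular to the strainer directions, a consequence of extremality combined with the opposite-pair structure of strainers. A subsidiary bookkeeping point is that the multiplicity of the covering $\{2U_j\}$ is bounded by a constant depending only on $n$, so only boundedly many inductive modifications affect any given point and the accumulated error remains of the form $\varkappa(\delta)r$.
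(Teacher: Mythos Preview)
Your proposal is correct and follows essentially the same route as the paper: the paper introduces the nearest point $g(x)\in E$ and proves (i) $|\psi_j^{-1}\circ\varphi_j(x),g(x)|<\varkappa(\delta)|xg(x)|$ via exactly the extremality/opposite-pair perpendicularity argument you describe, and (ii) $|f_k(x),g(x)|<\varkappa(\delta)|xg(x)|$ by induction on $k$---your consistency lemma is (i) and your three-case induction is the spelled-out version of (ii). One remark: the paper keeps the sharper bound $\varkappa(\delta)|xg(x)|$ (rather than weakening to $\varkappa(\delta)r$) and uses it afterward to deduce $|xf_k(x)|<2|xE|$, so you may want to record that intermediate estimate as well.
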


\begin{proof}
For $x\in M$, we denote by $g(x)$ a point in $E$ nearest to $x$.
We show that
\begin{enumerate}
\item $|\psi_j^{-1}\circ\varphi_j(x),g(x)|<\varkappa(\delta)|xg(x)|$ for any $x\in3U_j$;
\item $|f_k(x),g(x)|<\varkappa(\delta)|xg(x)|$ for any $x\in\bigcup_{l=1}^kU_l$.
\end{enumerate}
We first prove (1).
Let $x\in 3U_j$ and fix $1\le i\le m$.
Then, by the extremality of $E$, we have $\tilde\angle a_i^jg(x)x,\tilde\angle b_i^jg(x)x\le\pi/2$.
Since the sum of these two comparison angles is almost $\pi$, we have $|\tilde\angle a_i^jg(x)x-\pi/2|<\varkappa(\delta)$.
Together with $|xg(x)|<3r\ll|a_i^jx|$, this implies $||a_i^jx|-|a_i^jg(x)||<\varkappa(\delta)|xg(x)|$.
Therefore, we have $|\varphi_j(x)\psi_j(g(x))|<\varkappa(\delta)|xg(x)|$.
Since $\psi_j$ is a $\varkappa(\delta)$-almost isometry, (1) holds.
(2) easily follows from (1) by induction on $k$.
\end{proof}

Note that the above (2) implies $|xf_k(x)|<2r\ll\ell\delta$ for any $x\in\bigcup_{l=1}^kU_l$.

Next, we prove
\begin{clm}\label{clm:2}
For every $1\le j\le N$ and $1\le k\le N$, we have
\begin{equation}\label{eq:clm2}
\bigl|(\varphi_j(x)-\varphi_j(y))-(\psi_j\circ f_k(x)-\psi_j\circ f_k(y))\bigr|<\varkappa(\delta)|xy|
\end{equation}
for any $x,y\in3U_j\cap\bigcup_{l=1}^kU_l$.
\end{clm}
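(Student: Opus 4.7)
The plan is to prove Claim \ref{clm:2} by induction on $k$, with $j$ arbitrary. Writing $H_j := \varphi_j - \psi_j \circ f_k$, the claim asserts that $H_j$ is $\varkappa(\delta)$-Lipschitz on $3U_j \cap \bigcup_{l=1}^k U_l$. Since $f_k$ maps into $E$ and $\psi_j = \varphi_j|_E$, we have $\psi_j \circ f_k = \varphi_j \circ f_k$, so $H_j$ measures how much $f_k$ differs from the identity as seen through $\varphi_j$.

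Base case $k=1$: If $j = 1$, then $\psi_1 \circ f_1 = \varphi_1$ and $H_j \equiv 0$. If $j \ne 1$, the argument proceeds in the spirit of Claim \ref{clm:1}: at each $x \in 3U_j \cap U_1$, the direction from $f_1(x) \in E$ to $x$ is almost perpendicular to $E$ by extremality, while the strainer directions $(a_i^j)'$ are almost tangent to $E$ by Lemma \ref{lem:suspm} (since $(a_i^j, b_i^j)$ strains a point in $E$ with maximal local strainer number $m$). The first variation formula combined with the spherical-suspension approximation $\Sigma_x \to S^m(\hat\Sigma)$, as in the proof of Theorem \ref{thm:reg}, upgrades the pointwise estimate $|H_j(x)| < \varkappa(\delta)|xE|$ to the required Lipschitz bound.

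Inductive step: Assume the claim for $k-1$. By the piecewise definition of $f_k$, we split into cases. (a) If $x, y \in \bigcup_{l<k}U_l \setminus 2U_k$, then $f_k \equiv f_{k-1}$ there and the inductive hypothesis applies verbatim. (b) If $x, y \in U_k$, then $f_k = \psi_k^{-1}\circ \varphi_k$: for $j = k$ the LHS vanishes, and for $j \ne k$ we re-run the base-case Alexandrov argument. (c) If $x, y \in 2U_k \setminus U_k$, the convex combination $\psi_k \circ f_k = (1-\chi_k)\psi_k\circ f_{k-1} + \chi_k\varphi_k$ yields $\varphi_k - \psi_k \circ f_k = (1-\chi_k)(\varphi_k - \psi_k \circ f_{k-1})$, so for $j = k$,
\begin{align*}
&(\varphi_k - \psi_k \circ f_k)(x) - (\varphi_k - \psi_k \circ f_k)(y) \\
&= (1 - \chi_k(x))\bigl[(\varphi_k - \psi_k \circ f_{k-1})(x) - (\varphi_k - \psi_k \circ f_{k-1})(y)\bigr] \\
&\quad + \bigl[\chi_k(y) - \chi_k(x)\bigr](\varphi_k - \psi_k \circ f_{k-1})(y).
\end{align*}
The first term is bounded by the inductive hypothesis, and the second by $(C/r)|xy| \cdot \varkappa(\delta) r = \varkappa(\delta)|xy|$, using the Lipschitz bound on $\chi_k$ together with the pointwise estimate $|(\varphi_k - \psi_k \circ f_{k-1})(y)| < \varkappa(\delta)|yE| \le \varkappa(\delta) \cdot 2r$ obtained from Claim \ref{clm:1} and the base-case argument. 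For $j \ne k$ we exploit that $\psi_j \circ \psi_k^{-1}$ is a $\varkappa(\delta)$-almost isometry between subsets of $\mathbb R^m$, hence $\varkappa(\delta)$-close to an affine isometry on small balls, so that the convex combination structure transfers from $\psi_k$-coordinates to $\psi_j$-coordinates modulo $\varkappa(\delta)|xy|$. Mixed cases (where $x$ and $y$ lie in different pieces) reduce to the above by interpolating at region boundaries.

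The main obstacle is the $j \ne k$ subcase inside $U_k$ and its base-case analog: upgrading the pointwise smallness of $H_j$ to a genuine Lipschitz estimate requires a uniform derivative-level version of the "vertical–horizontal orthogonality" between the direction $(f_k(x))'_x$ and the strainer directions $(a_i^j)'_x$. This is precisely what Lemma \ref{lem:suspm} together with the first variation formula delivers. Once this ingredient is in place, the remainder of the induction is bookkeeping driven by the explicit convex-combination structure of $f_k$ and the $C/r$-Lipschitz bound on $\chi_k$.
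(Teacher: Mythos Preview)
Your overall inductive scheme and the $j=k$ computation in the interpolation region (your case (c)) via the convex-combination identity are correct and match the paper. The gap lies in the $j\ne k$ step.

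The orthogonality argument you sketch for the base case $j\ne1$ and for case (b) only yields the pointwise bound $|H_j(x)|<\varkappa(\delta)|xE|$; it does not by itself upgrade to $|H_j(x)-H_j(y)|<\varkappa(\delta)|xy|$ when $|xy|\ll r$. Knowing that the direction from $f_k(x)$ (or from the foot point $g(x)$) to $x$ is nearly perpendicular to $(a_i^j)'_x$ controls the size of $H_j(x)$ but says nothing about how $H_j$ varies along $E$. Likewise, your transfer in case (c) via $A:=\psi_j\circ\psi_k^{-1}$ does not work as stated: $A$ relates $\psi_j$ to $\psi_k$ on $E$, but $\varphi_j\ne A\circ\varphi_k$ off $E$, so the $j=k$ inequality cannot be pushed through $A$ to the $\varphi_j$ side.

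The paper handles $j\ne k$ differently. After reducing to $|xy|<r$ via Claim~\ref{clm:1}, it rewrites the inequality componentwise as
\[
\left|\cos\angle a_i^jxy-\cos\angle a_i^jf_k(x)f_k(y)\cdot\frac{|f_k(x)f_k(y)|}{|xy|}\right|<\varkappa(\delta),
\]
and then uses Lemma~\ref{lem:suspm} at both $f_k(x)$ and $x$ to expand each $\cos\angle a_i^j(\cdot)(\cdot)$ in the $k$-strainer basis:
\[
\cos\angle a_i^j(\cdot)(\cdot)\approx\sum_{h=1}^m\cos\angle a_i^j(\cdot)a_h^k\cdot\cos\angle a_h^k(\cdot)(\cdot).
\]
The crucial point is that the change-of-basis coefficients agree at $x$ and at $f_k(x)$: one has $|\angle a_i^jxa_h^k-\angle a_i^jf_k(x)a_h^k|<\varkappa(\delta)$ because $|xf_k(x)|\ll\ell\delta$ and the $a_i^j$ were deliberately chosen on the shortest paths $p_j\alpha_i^j$ at distance $\ell\delta$ from $p_j$, so that property~\eqref{eq:str} applies with the long strainer $\{(\alpha_i^j,\beta_i^j)\}$. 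This reduces the $j\ne k$ inequality to the already-established $j=k$ case. Your proposal is missing this explicit change-of-basis mechanism, which is the substantive content of the $j\ne k$ argument; the ``almost affine'' heuristic gestures at it but does not supply the needed relation between $\varphi_j$ and $\varphi_k$ off $E$.
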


\begin{proof}
Note that we may assume $|xy|<r$ by Claim \ref{clm:1}.
We use induction on $k$.
First, we consider the case $j=k$.
The base case $k=1$ is trivial.
Suppose $k\ge2$.
Let $x,y\in \bigcup_{l=1}^{k-1}U_l\cap(2U_k\setminus U_k)$ (the other case is similar).
Then, by the definition of $f_k$,
\begin{align*}
&(\varphi_k(x)-\varphi_k(y))-(\psi_k\circ f_k(x)-\psi_k\circ f_k(y))\\
&=(1-\chi_k(x))(\varphi_k(x)-\psi_k\circ f_{k-1}(x))-(1-\chi_k(y))(\varphi_k(y)-\psi_k\circ f_{k-1}(y))\\
&=(1-\chi_k(x))((\varphi_k(x)-\varphi_k(y))-(\psi_k\circ f_{k-1}(x)-\psi_k\circ f_{k-1}(y)))\\
&\qquad\qquad\qquad\qquad\qquad\qquad\qquad-(\chi_k(x)-\chi_k(y))(\varphi_k(y)-\psi_k\circ f_{k-1}(y)).
\end{align*}
The norm of the first term of the last formula is less than $\varkappa(\delta)|xy|$ by the induction hypothesis.
The same is true for the second term by Claim \ref{clm:1} and the $C/r$-Lipschitzness of $\chi_k$.

Next, we consider the case $j\neq k$.
Note that the inequality \eqref{eq:clm2} is equivalent to
\begin{equation}\label{eq:clm2'}
\left|\cos\angle a_i^jxy-\cos\angle a_i^jf_k(x)f_k(y)\cdot\frac{|f_k(x)f_k(y)|}{|xy|}\right|<\varkappa(\delta)
\end{equation}
for all $1\le i\le m$ (use the property \eqref{eq:str} twice).
By the induction hypothesis, we may assume $x,y\in3U_j\cap\bigcup_{l=1}^kU_l\cap 3U_k$ since $|xy|<r$.
Fix $1\le i\le m$.
Firstly, by Lemma \ref{lem:suspm}, there exists a $\varkappa(\delta)$-Hausdorff approximation $\Phi_1:\Sigma_{f_k(x)}\to S^m(\hat\Sigma_1)$ which sends $\{(a_h^k)'_{f_k(x)},(b_h^k)'_{f_k(x)})\}_{h=1}^m$ to an orthogonal $m$-frame.
Furthermore, $\Phi_1$ sends $(a_i^j)'_{f_k(x)}$ into the $\varkappa(\delta)$-neighborhood of $\mathbb S^{m-1}$ in $S^m(\hat\Sigma_1)$ since $((a_i^j)'_{f_k(x)},(b_i^j)'_{f_k(x)})$ is a $(1,\varkappa(\delta))$-strainer and $\diam\hat\Sigma_1\le\pi/2$.
Therefore, we have
\[\left|\cos\angle a_i^jf_k(x)f_k(y)-\sum_{h=1}^m\cos\angle a_i^jf_k(x)a_h^k\cdot\cos\angle a_h^kf_k(x)f_k(y)\right|<\varkappa(\delta).\]
Secondly, since $|xf_k(x)|\ll\ell\delta$, we have $|\tilde\angle a_i^jxa_h^k-\tilde\angle a_i^jf_k(x)a_h^k|<\varkappa(\delta)$ for all $1\le h\le m$.
Furthermore, since $a_i^j$, $b_i^j$ are on the shortest paths $p_j\alpha_i^j$, $p_j\beta_i^j$ at distance $\ell\delta$ from $p_j$, where $\{(\alpha_i^j,\beta_i^j)\}_{i=1}^m$ is an $(m,\delta)$-strainer at $p_j$ with length $>\ell$, we have 
\[\left|\angle a_i^jxa_h^k-\angle a_i^jf_k(x)a_h^k\right|<\varkappa(\delta)\]
for all $1\le h\le m$ (use the property \eqref{eq:str} twice).
Finally, by Lemma \ref{lem:susp}, there exists a $\varkappa(\delta)$-Hausdorff approximation $\Phi_2:\Sigma_x\to S^m(\hat\Sigma_2)$ which sends $\{((a_h^k)'_x,(b_h^k)'_x)\}_{h=1}^m$ to an orthogonal $m$-frame.
Furthermore, by the previous inequality, $\Phi_2$ sends $(a_i^j)'_x$ into the $\varkappa(\delta)$-neighborhood of $\mathbb S^{m-1}$ in $S^m(\hat\Sigma_2)$ since $\Phi_1$ sends $(a_i^j)'_{f_k(x)}$ into the $\varkappa(\delta)$-neighborhood of $\mathbb S^{m-1}$ in $S^m(\hat\Sigma_1)$.
Therefore, we have
\[\left|\cos\angle a_i^jxy-\sum_{h=1}^m\cos\angle a_i^jxa_h^k\cdot\cos\angle a_h^kxy\right|<\varkappa(\delta).\]
Combining the three inequalities above with the inequality \eqref{eq:clm2'} for $j=k$, we obtain \eqref{eq:clm2'} for $j\neq k$.
\end{proof}

By Lemma \ref{lem:grad}, take $\varepsilon>0$ such that $|\nabla_x\dist_E|>\varepsilon$ and $\vol_{n-1}\Sigma_x>\varepsilon$ for any $x\in U_\varepsilon(E)\setminus E$ (note that the choice of $\varepsilon$ depends on the diameter of $E$ and the volume of a small neighborhood of $E$).
Let $\delta\ll\varepsilon$ and $0<\rho\ll r<\ell\delta^2$.
Note that $U_\rho(E)\subset\bigcup_{l=1}^NU_l\subset U_\varepsilon(E)$.
Define $f:=f_N|_{U_\rho(E)}$.

Let us verify the properties (1)--(3) in Theorem \ref{thm:gnhd}.
Since $\rho\ll r$, it suffices to prove (1) on each $3U_j$.
Since $\psi_j$ is a $\varkappa(\delta)$-almost isometry, we may consider $\psi_j\circ f$ instead of $f$.
Then, the $(1+\varkappa(\delta))$-Lipschitzness of $\psi_j\circ f$ follows from that of $\varphi_j$ and Claim \ref{clm:2}.
Moreover, the $(1-\varkappa(\delta))$-openness of $\psi_j\circ f$ follows from Claim \ref{clm:2} and Lemma \ref{lem:open} since $\varphi_j$ satisfies the assumption of Lemma \ref{lem:open} (see the proof of Proposition \ref{prop:open}).
(2) is obvious from the construction of $f$.
(3) is proved in the same way as Corollary \ref{cor:lnhd}.
Recall that $(\varphi_j,|E\cdot|)$ is noncritical on $(U_\rho(E)\setminus E)\cap 3U_j$ in the sense of \cite{Per1}.
Thus, Claim \ref{clm:2} means that $(\psi_j\circ f,|E\cdot|)$ is noncritical on $(U_\rho(E)\setminus E)\cap 3U_j$ in the generalized sense of \cite[\S5]{F2}.
Therefore, (3) follows from the modified version of the fibration theorem \cite[5.2]{F2} in the same way as Corollary \ref{cor:lnhd}.
\end{proof}

In particular, under the assumption of Theorem \ref{thm:gnhd}, one can construct a deformation retraction of $U_\varepsilon(E)$ onto $E$ by crushing the fibers of $f$, using their cone structure.
In Appendix \ref{sec:A}, we construct such a deformation retraction for a general compact extremal subset.

Theorem \ref{thm:gnhd} (and its proof) can be applied to collapsing sequences:

\begin{cor}\label{cor:gnhd}
Let $M$, $E$, $\delta>0$ and $\rho>0$ be as in Theorem \ref{thm:gnhd}.
Let $\hat M$ be an Alexandrov spaces sufficiently close to $M$ such that $\dim\hat M>\dim M$.
Let $\hat E$ be a (not necessarily extremal) subset of $\hat M$ sufficiently close to $E$.
Assume further that $\delta\ll\vol\Sigma_x$ for any $x\in U_\rho(\hat E)$.
Then, $U_\rho(\hat E)$ admits a fiber bundle structure over $E$.
\end{cor}

\begin{proof}
The proof is almost the same as that of Theorem \ref{thm:gnhd}.
The outline is as follows.
Since the map $f$ of Theorem \ref{thm:gnhd} is constructed out of distance functions, it can be lifted to $\hat M$.
More precisely, one can construct a map $\hat f:U_\rho(\hat E)\to E$ just by replacing $\varphi_j$ and $U_j$ in the proof of Theorem \ref{thm:gnhd} with their natural lifts $\hat\varphi_j$ and $\hat U_j$, respectively.
Then, Claims \ref{clm:1} and \ref{clm:2} can be proved for $\hat f$ in the same way as above.
Furthermore, by the lower semicontinuity of angles, $\hat\varphi_j$ is noncritical on $U_\rho(\hat E)\cap 3\hat U_j$ and $(\hat\varphi_j,|\hat E\cdot|)$ is noncritical on $(U_\rho(\hat E)\setminus U_{\rho/2}(\hat E))\cap 3\hat U_j$.
Thus, the fibration theorem \cite[5.2]{F2} (together with \cite[\S1 Complement to Theorem A]{Per1}) yields a bundle structure of $U_\rho(\hat E)$ which respects $|\hat E\cdot|$ outside $U_{\rho/2}(\hat E)$.
\end{proof}

\appendix

\section{}\label{sec:A}

In this appendix, we prove Theorem \ref{thm:def}.
The proof is independent of the other results in this paper: it does not use the regular point theory.

\begin{proof}[Proof of Theorem \ref{thm:def}]
Let $M$ be an $n$-dimensional Alexandrov space with curvature $\ge\kappa$ and $E\subset M$ an extremal subset with diameter $\le D$ (in the extrinsic metric).
Assume $\vol_n(U_1(E))\ge v>0$.
We denote by $c$ and $C$ various small and large positive constants, respectively, which depend only on $n$, $\kappa$, $D$ and $v$.

Let $\nu>0$ be small enough and take a $\nu$-discrete set $\{q_\alpha\}_{\alpha=1}^N$ in $U_1(E)\setminus E$ such that $N\ge c/\nu^n$.
Consider a function
\[h:=\frac1N\sum_{\alpha=1}^N\dist_{q_\alpha}.\]
Let $0<\varepsilon\ll\min_\alpha|q_\alpha E|$.
We use the gradient flow of $h$ to push $U_\varepsilon(E)$ into $E$.

\begin{clm}[cf.\ Lemma \ref{lem:grad}]\label{clm:A1}
For any $x\in U_\varepsilon(E)\setminus E$, we have
\[\dist_E'(\nabla_xh)<-c.\]
In particular, $\dist_E$ is monotonically decreasing along the gradient flow of $h$.
\end{clm}

\begin{proof}
We first show that $h'(\xi)>c$ for any $\xi\in E'_x$.
The proof is almost the same as that of Lemma \ref{lem:grad}.
Here is an outline.
A standard comparison argument shows that the number of $\alpha$ such that $|\dist_{q_\alpha}'(\xi)|<\nu$ is less than $C/\nu^{n-1}$.
On the other hand, the extremality of $E$ implies $\dist_{q_\alpha}'(\xi)>-\nu$ since $\varepsilon\ll\min_\alpha|q_\alpha E|$.
Therefore, taking an average on them, we obtain
\[h'(\xi)=\frac1N\sum_{\alpha=1}^N\dist_{q_\alpha}'(\xi)>c\]
if $\nu$ is small enough.

By the property of the gradient (\cite[1.3.2]{Pet2}) and the first variation formula, we have
\[h'(\xi)\le\langle\nabla_xh,\xi\rangle\le-\dist_E'(\nabla_xh).\]
Together with the above, this implies the desired inequality.
\end{proof}

Let $\Phi:U_\varepsilon(E)\times[0,\infty)\to U_\varepsilon(E)$ be the gradient flow of $h$.
Note that $\Phi$ is Lipschitz (\cite[2.1.4]{Pet2}).
We need to modify it so that points of $E$ are fixed.
For $x\in U_\varepsilon(E)$, let $T(x)$ denote the minimum time such that $\Phi(x,T(x))\in E$.
By Claim \ref{clm:A1}, $T(x)$ is uniformly bounded above.

\begin{clm}[cf.\ {\cite[3.4]{MY}}]\label{clm:A2}
The function $x\mapsto T(x)$ is Lipschitz.
\end{clm}

Note that \cite[3.4]{MY} cannot be applied because Claim \ref{clm:A1} does not hold on $E$ (clearly, $\dist_E'(\nabla h)=0$ on $E$ by the extremality).
The author is grateful to the referee for the simplification of the following proof.

\begin{proof}
Let $x,y\in U_\varepsilon(E)$.
We may assume $T(x)<T(y)$.
Set $x':=\Phi(x,T(x))$ and $y':=\Phi(y,T(x))$.
Then, $x'\in E$ and $|x'y'|\le L|xy|$ for some constant $L>0$ by the Lipschitz continuity of $\Phi$.
In particular, $|Ey'|\le L|xy|$.
Furthermore, Claim \ref{clm:A1} implies that $T(y')\le |Ey'|/c$.
Thus, we have 
\[T(y)=T(x)+T(y')\le T(x)+\frac Lc|xy|,\]
which completes the proof.
\end{proof}

Set $T:=\sup_{x\in U_\varepsilon(E)}T(x)$ and define $\Psi:U_\varepsilon(E)\times[0,T]\to U_\varepsilon(E)$ by
\[\Psi(x,t):=\Phi\left(x,\frac{T(x)}{T}t\right).\]
Then, by Claim \ref{clm:A2}, $\Psi$ is a Lipschitz deformation retraction of $U_\varepsilon(E)$ onto $E$.
\end{proof}


\begin{thebibliography}{BGP}

\bibitem[BBI]{BBI}
D. Burago, Y. Burago and S. Ivanov, A course in metric geometry, Grad. Stud. Math., 33, Amer. Math. Soc., Providence, RI, 2001.

\bibitem[BGP]{BGP}
Yu. Burago, M. Gromov and G. Perel'man, A.D. Alexandrov spaces with curvature bounded below, Uspekhi Mat. Nauk 47 (1992), no. 2(284), 3--51, 222; translation in Russian Math. Surveys 47 (1992), no. 2, 1--58.

\bibitem[F1]{F1}
T. Fujioka, Uniform boundedness on extremal subsets in Alexandrov spaces, preprint \href{https://arxiv.org/abs/1809.00603}{arXiv:1809.00603}, 2018.

\bibitem[F2]{F2}
T. Fujioka, A fibration theorem for collapsing sequences of Alexandrov spaces, to appear in J. Topol. Anal., \href{https://doi.org/10.1142/S179352532150028X}{DOI:10.1142/S179352532150028X}, 2021.

\bibitem[HS]{HS}
J. Harvey and C. Searle, Orientation and symmetries of Alexandrov spaces with applications in positive curvature, J. Geom. Anal. 27 (2017), no. 2, 1636--1666.

\bibitem[K]{K}
V. Kapovitch, Perelman's stability theorem, Surv. Differ. Geom., XI, 103--136, Int. Press, Somerville, MA, 2007.

\bibitem[LiN]{LiN}
N. Li and A. Naber, Quantitative estimates on the singular sets of Alexandrov spaces, Peking Math. J. 3 (2020), 203--234.

\bibitem[L]{L}
A. Lytchak, Open map theorem for metric spaces, Algebra i Analiz 17 (2005), no.3, 139--159; St. Petersburg Math. J. 17 (2006), no. 3, 477--491.

\bibitem[LyN]{LyN}
A. Lytchak and K. Nagano, Geodesically complete spaces with an upper curvature bound, Geom. Funct. Anal. 29 (2019), 295--342.

\bibitem[MY]{MY}
A. Mitsuishi and T. Yamaguchi, Good coverings of Alexandrov spaces, Trans. Amer. Math. Soc. 372 (2019), 8107-8130.

\bibitem[OS]{OS}
Y. Otsu and T. Shioya, The Riemannian structure of Alexandrov spaces, J. Differential Geom. 39 (1994), no. 3, 629--658.

\bibitem[Per1]{Per1}
G. Perelman, Alexandrov's spaces with curvatures bounded from below II, preprint, 1991.

\bibitem[Per2]{Per2}
G. Ya. Perel'man, Elements of Morse theory on Aleksandrov spaces, Algebra i Analiz 5 (1993), no. 1, 232--241; translation in St. Petersburg Math. J. 5 (1994), no. 1, 205--213.

\bibitem[PP1]{PP1}
G. Ya. Perel'man and A. M. Petrunin, Extremal subsets in Aleksandrov spaces and the generalized Liberman theorem, Algebra i Analiz 5 (1993), no. 1, 242--256; translation in St. Petersburg Math. J. 5 (1994), no. 1, 215--227.

\bibitem[PP2]{PP2}
G. Perelman and A. Petrunin, Quasigeodesics and gradient curves in Alexandrov spaces, preprint, 1994.

\bibitem[Pet1]{Pet1}
A. Petrunin, Applications of quasigeodesics and gradient curves, Comparison geometry, 203--219, Math. Sci. Res. Inst. Publ., 30, Cambridge Univ. Press, Cambridge, 1997.

\bibitem[Pet2]{Pet2}
A. Petrunin, Semiconcave functions in Alexandrov's geometry, Metric and comparison geometry, 137--201, Surv. Differ. Geom., 11, Int. Press, Somerville, MA, 2007.

\bibitem[S]{S}
T. Shioya, Mass of rays in Alexandrov spaces of nonnegative curvature, Comment. Math. Helv. 69 (1994), no. 2, 208--228.

\bibitem[WSS]{WSS}
Y. Wang, X. Su and H. Sun, A new proof of almost isometry theorem in Alexandrov geometry with curvature bounded below, Asian J. Math. 17 (2013), no. 4, 715--728.

\bibitem[Y1]{Y1}
T. Yamaguchi, A convergence theorem in the geometry of Alexandrov spaces, Actes de la table ronde de g\'eom\'etrie diff\'erentielle, 601--642, S\'emin. Congr., 1, Soc. Math. France, Paris, 1996.

\bibitem[Y2]{Y2}
T. Yamaguchi, Collapsing 4-manifolds under a lower curvature bound, preprint, \href{https://arxiv.org/abs/1205.0323}{arXiv:1205.0323}, 2012.

\end{thebibliography}
\end{document}